\documentclass[reqno]{amsart}
\usepackage{amssymb,amsmath,hyperref}
\usepackage{amsrefs, float}
\usepackage[foot]{amsaddr}
\usepackage{bbold,stackrel, multicol}
 
\newcommand{\Z}{{\textsf{\textup{Z}}}}

\newtheorem{thm}{Theorem}

\newtheorem{cor}[thm]{Corollary}
\newtheorem{defi}[thm]{Definition}

\newtheorem{rem}[thm]{Remark}
\newtheorem{nota}[thm]{Notation}

\newtheorem{princ}[thm]{Principle}

\newtheorem*{tempo*}{Template}

\newcommand\be{\begin{equation}}
\newcommand\ee{\end{equation}} 

\usepackage{amsmath,amsfonts} 

\usepackage[applemac]{inputenc}

\def\bdefi{\begin{defi}\rm}
\def\edefi{\end{defi}}
\def\bnota{\begin{nota}\rm}
\def\enota{\end{nota}}
\def\FIVE{\Pi_{1}^{1}\text{-\textup{\textsf{CA}}}_{0}}

\def\SIXko{\Pi_{k+1}^{1}\text{-\textsf{\textup{CA}}}_{0}}
\def\SIXK{\Pi_{k}^{1}\text{-\textsf{\textup{CA}}}_{0}^{\omega}}

\def\ATR{\textup{\textsf{ATR}}}

\def\ZF{\textup{\textsf{ZF}}}

\def\SUP{\textup{\textsf{sup}}}

\def\RCA{\textup{\textsf{RCA}}}
\def\({\textup{(}}
\def\){\textup{)}}

\def\RCAo{\textup{\textsf{RCA}}_{0}^{\omega}}
\def\ACAo{\textup{\textsf{ACA}}_{0}^{\omega}}

\def\N{{\mathbb  N}}
\def\Q{{\mathbb  Q}}
\def\R{{\mathbb  R}}

\def\SS{\textup{\textsf{S}}}
\def\FSAC{\textup{\textsf{finite-$\Sigma_{1}^{1}$-AC$_{0}$}}}

\def\di{\rightarrow}

\def\asa{\leftrightarrow}

\def\ACA{\textup{\textsf{ACA}}}

\def\QFAC{\textup{\textsf{QF-AC}}}

\def\SAC{\Sigma_{1}^{1}\textup{\textsf{-AC}}_{0}}

\def\GS{\textup{\textsf{GS}}}
\def\KINL{\textup{\textsf{KINL}}}

\def\SIND{\Sigma\textup{\textsf{-IND}}}

\def\SUP{\textup{\textsf{sup}}}
\def\SAC{\textup{\textsf{$\Sigma_{1}^{1}$-AC$_{0}$}}}

\def\cocode{\textup{\textsf{cocode}}}

\def\u{\textup{\textsf{u}}}

\def\BOOT{\textup{\textsf{BOOT}}}

\def\IND{\textup{\textsf{IND}}}

\def\RANGE{\textup{\textsf{RANGE}}}

\def\CWO{\textup{\textsf{CWO}}}

\def\SUP{\textup{\textsf{sup}}}

\usepackage{mathrsfs}  

\usepackage{graphicx}
\usepackage{tikz}
\usetikzlibrary{matrix, shapes.misc}
\usepackage{comment,tikz-cd}

\numberwithin{equation}{section}
\numberwithin{thm}{section}

\begin{document}
\title{On countability and representations}
%\author{Dag Normann}
%\address{Department of Mathematics, The University 
%of Oslo, P.O. Box 1053, Blindern N-0316 Oslo, Norway}
%\email{dnormann@math.uio.no}
\author{Sam Sanders}
\address{Department of Philosophy II, RUB Bochum, Germany}
\email{sasander@me.com}
\keywords{Countable set, representations, Reverse Mathematics}
\subjclass[2020]{Primary: 03B30, 03F35}

\begin{abstract}
The topic of this paper is the subtle interplay between countability and representations.  
In particular, we establish that the definition of countability of a certain set $X$ crucially hinges on the associated equivalence relation $=_{X}$.  
Armed with this knowledge, we study well-known and basic principles about countable sets, going back to Cantor, Sierpi\'nski, and K\"{o}nig, working in Kohlenbach's higher-order Reverse Mathematics.
While these principles are relatively weak in second-order Reverse Mathematics, we obtain equivalences involving countable choice and Feferman's projection principle.  
The latter are essentially the strongest axioms studied in higher-order Reverse Mathematics and usually only come to the fore when dealing with the uncountable.       
\end{abstract}
%
%\setcounter{page}{0}
%\tableofcontents
%\thispagestyle{empty}
%\newpage
%%
\maketitle              % typeset the header of the contribution
%
%\vspace{-0.5mm}
\section{Introduction}
\subsection{Aim and motivation}\label{aimo}
In a nutshell, we study the definition of `countable set $X$' so as to bring out the crucial role of the associated equivalence relation `$=_{X}$'.  
With the latter in place, we show that basic principles about countable sets imply strong axioms, even compared to known results concerning the uncountable.  

\smallskip

In more detail, the central question we study in this paper is both exceedingly simple and devilishly subtle.  
\begin{center}
\emph{What is a countable set?}
\end{center}
The most general answer is of course that a set be countable if there is an injection to the naturals.  
By definition, such a mapping only sends \emph{equal} objects to \emph{equal} naturals.  
While equality on the naturals should be clear, equality for other objects is a (much) more 
subtle affair in case the latter involve \emph{representations}.  

\smallskip

For instance, real numbers can be represented via Cauchy sequences.  
Thus, two reals $x, y$ are called equal, denoted $x=_{\R}y$, if the underlying Cauchy sequences have the same limit.  
However, there are uncountably many different Cauchy sequences that converge to $0$.  Hence, a singleton set $\{x\}$ contains
only one real number, namely $x$, but uncountably many different representations of this real $x$.  

\smallskip

More generally, in a metric space $(M, d)$, two objects $x, y\in M$ are equal, denoted $x=_{M} y$, if $d(x, y)=_{\R}0$, by the definition of metric space.   
Hence, the observations from the previous paragraph carry over.  Moreover, in the Lebesgue space $L^{p}$, two functions remain equal (under the induced metric) if we only change the former on a measure zero set. 
The point is again that a single element of a metric space $(M, d)$ can have \emph{many different} representations that are all equal under $=_{M}$.  

\smallskip

Now, the previous paragraphs motivate the following definition of countability.  
\begin{defi}\label{trifff}
A set $X$ with equivalence relation $=_{X}$ is called \emph{countable} if there is a mapping $Y:X\di \N$ such that 
\be\label{edelkraft}
(\forall x, y\in X)(Y(x)=_{\N}Y(x)\di x=_{X}y).
\ee
\edefi
Our motivation for all the above pedantry is as follows:  we show in Section~\ref{defguy} that, when formulated with Definition \ref{trifff}, basic and well-known principles about countable sets of reals imply or are equivalent to rather strong axioms, working in Kohlenbach's higher-order Reverse Mathematics (often abbreviated `RM').  We shall show that well-known theorems by Cantor, Sierpi\'nski, and K\"ong imply Feferman's projection principle and/or countable choice. 

\smallskip

We introduce definitions and preliminaries regarding Kohlenbach's higher-order approach to RM in Section~\ref{prelim}.  
We stress that the aforementioned axioms, countable choice and Feferman's projection principle, are the strongest generally\footnote{The only exception known to the author is a version of the Axiom of \emph{dependent} Choice akin to $\QFAC^{0,1}$ studied in \cite{dagsam16}.} encountered in higher-order RM, especially for theorems dealing with countable sets.  

\smallskip

In terms of second-order consequences, we can formulate e.g.\ Cantor's characterisation theorem for countable linear orders $(X, \preceq_{X})$ using either Definition \ref{trifff} for $X\subset \R$ or `$X$ is a countable subset of $\R$'.  
As we will see, the latter implies $\FIVE$ while the former remains at the level of $\ATR_{0}$.  %The second-order version is equivalent to $\ACAo$ (\cite{simpson2})

\smallskip

We discuss and motivate Definition \ref{trifff} in more detail in Section \ref{othernamez}.  
We study the following topics in Section \ref{tonyfox}-\ref{pifpoefpaf}.  
\begin{itemize}  
\item Cantor's characterisations of countable linear orders and related statements about orderings (Section \ref{tonyfox}).  
\item Sierpi\'nski's characterisation of countable metric spaces without isolated points (Section \ref{countri}).
\item The supremum principle for continuous functions on countable second-countable spaces and the Ginsburg-Sands theorem (Section \ref{SPAST}).  
\item K\"onig's infinity lemma and similar principles (Section \ref{pifpoefpaf}).  
\end{itemize}
In each case, we obtain equivalences involving Feferman's projection principle and/or countable choice.  %The necessary definitions are introduced in Section \ref{prelim}.   
The Ginsburg-Sands theorem has recently been studied in second-order RM (\cite{benham}).  

\smallskip

In conclusion, Definition \ref{trifff} constitutes a most general notion of countability while basic principles yield strong axioms when formulated with the former.  
%The foundational implications of this observation are discussed in Section \ref{after}.  

\subsection{Preliminaries}\label{prelim}
\subsubsection{Introduction}
We introduce some required definitions for the below.  We assume familiarity with Kohlenbach's higher-order RM, the base theory $\RCAo$ in particular.  
The original reference is \cite{kohlenbach2} with a more `basic' extended introduction in \cite{sammetric}.   A monograph on higher-order RM is currently under review (\cite{samBOOK}).
For the rest of this section, we describe the essentials needed for developing real analysis in higher-order RM.   

\smallskip

First of all, real numbers are defined in $\RCAo$ in the same way as in second-order RM, i.e.\ as fast-converging Cauchy sequences (\cite{simpson2}*{II.5.3}).
Hence, the meaning of `$x=_{\R}y$' and `$x\in \R$' is clear while a function from reals to reals is simply a mapping $\Phi:\N^{\N}\di \N^{\N}$ mapping equal reals to equal reals.  
\be\label{sefk}
 (\forall x, y \in \R)(x=_{\R}y\di \Phi(x)=\Phi(y)).  
 \ee
The formula \eqref{sefk} is often described as `$\Phi$ satisfies the axiom of real extensionality' and variations. 
We use $f:\R\di \R$ to suggest that $f$ is a function from $\R$ to $\R$.  
 
 \smallskip
 
 Secondly, having defined functions on the reals, we can define sets of reals.  
 \bdefi[Sets]\label{char}
 \begin{itemize}
 \item A subset $X\subset \R$ is defined via $F_{X}:\R\di \{0,1\}$ where $x\in X\asa F_{X}(x)=1$ for all $x\in \R$. 
\item We write `$A\subseteq B$' if we have $(\forall x\in \R)(x\in A\di x\in B)$.  % for all $x\in \R$. 
\item A set $O\subseteq \R$ is \emph{open} in case $x\in O$ implies that there is $k\in \N$ such that $B(x, \frac{1}{2^{k}})\subseteq O$.\label{qzopen}  
\item The complement $O^{c}$ of an open set $O\subset \R$ is called closed.  
\item A set $X\subset \R$ is \emph{enumerable} if there is a sequence $(x_{n})_{n\in \N}$ of reals that includes all elements of $X$.  
 \end{itemize}
 \edefi
By the previous, sets are given by characteristic functions and we shall always work in a strong enough logical system (see Section \ref{eenbeetjeaxi})
to guarantee that e.g.\ the unit interval is a set as in Definition \ref{char}.  The definition of `countable set of reals' is discussed in detail in Section \ref{othernamez}.  
 
\smallskip

Finally, we sometimes use type theoretic notation, i.e.\ $n^{0}$ for $n\in \N$, $f^{1}$ for $f\in \N^{\N}$, or $Y^{2}$ for $Y:\N^{\N}\di\N$.  
In this way, finite sequences of naturals and elements of Baire space are denoted $v^{0^{*}}$ and $w^{1^{*}}$; the number $|u|$ is the length of the finite sequence $u$ while $u(i)$ is the $i$-th element of $u$ for $i<|u|$.  
 
\subsubsection{Axioms of higher-order arithmetic}\label{eenbeetjeaxi}
We introduce some functionals that serve as the axioms of higher-order RM.  
We stress that some of the below functionals were already 
studied by Hilbert and Bernays in the \emph{Grundlagen} (\cite{hillebilly2}).  

\smallskip

First of all, we consider the following axiom where the functional $E$ is also called \emph{Kleene's quantifier $\exists^{2}$} and is discontinuous on Cantor space.  
\be\tag{$\exists^{2}$}
(\exists E:\N^{\N}\di \{0,1\})(\forall f\in\N^{\N})( E(f)=0 \asa (\exists n\in \N)(f(n)=0)).
\ee
We write $\ACAo\equiv \RCAo+(\exists^{2})$ and observe that the latter proves the same second-order sentences as $\ACA_{0}$ (see \cite{hunterphd}).
We shall mostly work in $\ACAo$ as this guarantees that the unit interval is a set following Definition \ref{char}.  
Over $\RCAo$, $(\exists^{2})$ is equivalent to $(\mu^{2})$ (\cite{kohlenbach2}) where the later expresses that there is $\mu : \N^{\N}\di \N$ such that for $f\in \N^{\N}$ we have
\be\label{muf}
(\exists n\in \N)(f(n)=0)\di f(\mu(f))=0.
\ee
%We are mostly dealing with \emph{conventional} comprehension here, i.e.\ only parameters over $\N$ and $\N^{\N}$ are allowed in formula classes like $\Pi_{k}^{1}$ and $\Sigma_{k}^{1}$.  
Secondly, consider the following axiom where the functional $\SS^{2}$ is often called \emph{the Suslin functional} (\cite{kohlenbach2, avi2, yamayamaharehare}):
\be\tag{$\SS^{2}$}
(\exists\SS:\N^{\N}\di \{0,1\})(\forall f \in \N^{\N})\big[  (\exists g \in \N^{\N})(\forall n \in \N)(f(\overline{g}n)=0)\asa \SS(f)=0  \big].
\ee
By definition, the Suslin functional $\SS^{2}$ can decide whether a $\Sigma_{1}^{1}$-formula in normal form, i.e.\ as in the left-hand side of $(\SS^{2})$, is true or false.   
The system $\FIVE^{\omega}\equiv \RCAo+(\SS^{2})$ proves the same $\Pi_{3}^{1}$-sentences as $\FIVE$ (see \cite{yamayamaharehare}).  

\smallskip

Thirdly, we define the functional $\SS_{k}^{2}$ which decides the truth or falsity of $\Sigma_{k}^{1}$-formulas in normal form; we also define 
the system $\SIXK$ as $\RCAo+(\SS_{k}^{2})$, where  $(\SS_{k}^{2})$ expresses that $\SS_{k}^{2}$ exists.  
We define $\Z_{2}^{\omega}$ as $\cup_{k}\SIXK$ as one possible higher-order version of $\Z_{2}$.
The functionals $\nu_{n}$ from \cite{boekskeopendoen}*{p.\ 129} are essentially $\SS_{n}^{2}$ strengthened to return a witness (if existent) to the $\Sigma_{n}^{1}$-formula at hand.  %  if it exists. 
The operator $\nu_{n}$ is essentially Hilbert-Bernays' operator $\nu$ from \cite{hillebilly2}*{p.\ 479} restricted to $\Sigma_{n}^{1}$-formulas. 

\smallskip

\noindent
Fourth, we introduce Kleene's quantifier $\exists^{3}$ as follows:
\be\tag{$\exists^{3}$}
(\exists E: (\N^{\N}\di \N)\di \N)(\forall Y:\N^{\N}\di \N)\big[  (\exists f \in \N^{\N})(Y(f)=0)\asa E(Y)=0  \big].
\ee
Both $\Z_{2}^{\Omega}\equiv \RCAo+(\exists^{3})$ and $\Z_{2}^\omega\equiv \cup_{k}\SIXK$ are conservative over $\Z_{2}$ (see \cite{hunterphd}).  
%Despite this close connection, $\Z_{2}^{\omega}$ and $\Z_{2}^{\Omega}$ can behave quite differently, as will become clear.   
The functional from $(\exists^{3})$ is also called `$\exists^{3}$', and we use the same convention for other functionals.  Hilbert-Bernays' operator $\nu$ (see \cite{hillebilly2}*{p.\ 479}) is essentially Kleene's $\exists^{3}$, modulo a non-trivial fragment of the Axiom of (quantifier-free) Choice.    

\smallskip

Fifth, the late Sol Feferman was a leading Stanford logician with a life-long interest in foundational matters (\cite{fefermanlight, fefermanmain}), especially so-called predicativist mathematics following Russell and Weyl (\cite{weyldas}). 
As part of this foundational research, Feferman introduced the `projection principle' \textsf{Proj}$_{1}$ in \cite{littlefef}, a third-order version of $(\exists^{3})$.
%% that is impredicative and highly explosive\footnote{Feferman's predicative system $\textup{VFT}+(\mu^{2})$ from \cite{littlefef} is similar to $\ACAo$.  The former plus  \textsf{Proj1} implies $\Z_{2}$, as noted in \cite{littlefef}*{\S5}.}.
%%In this section, we obtain equivalences between our version of $\textsf{Proj1}$ and the supremum principle (Princ.~\ref{SIP}) for weak continuity classes from Def.\ \ref{KY}.  
We study the principle $\BOOT$, which is \textsf{Proj}$_{1}$ in the language of higher-order RM
\begin{princ}[$\BOOT$] For $Y^{2}$, there is $X\subset \N$ such that 
\be\label{EZ}
(\forall n\in \N)\big[n\in X\asa (\exists f\in \N^{\N})(Y(f, n)=0)  \big].
\ee
\end{princ}
The name of this principle derives from the verb `to bootstrap'.  Indeed, $\BOOT$ is third-order and weaker than $(\exists^{3})$, but we still have that $\SIXK+\BOOT$ proves $\SIXko$.
In other words, $\BOOT$ bootstraps itself along the second-order comprehension hierarchy so central to mathematical logic (see \cite{sigohi}).  
%oth $\BOOT$ and $(\exists^{3})$ are equivalent to (intimately related) basic statements about continuous functions on metric spaces, as studied in Chapter \ref{metric}.  
We have previously obtained equivalences for $\BOOT$ (\cite{samHARD, samSECOND}) but the results in Section \ref{defguy} are a massive improvement as they 
pertain to basic statements about \emph{countable} objects.  

\smallskip %\newpage

Next, the following fragment of countable choice, not provable in $\ZF$, plays an important role in higher-order RM.
\begin{princ}[$\QFAC^{0,1}$]
Let $\varphi$ be quantifier-free with $(\forall n\in \N)(\exists f\in \N^{\N})\varphi(f, n)$, then there exists a sequence $  (f_{n})_{n\in \N}$ in $\N^{\N}$ with $(\forall n\in \N)\varphi(f_{n}, n)$.
\end{princ}
The local equivalence between sequential and `epsilon-delta' continuity cannot be proved in $\ZF$ (\cite{heerlijkheid}), but can be established in $\RCAo+\QFAC^{0,1}$ (\cite{kohlenbach2}).  

\smallskip

We recall that certain equivalences in second-order RM require induction beyond what is available in $\RCA_{0}$ (see \cite{neeman}). 
The following fragment of the induction axiom is among the strongest used in higher-order RM; the numbering is taken from \cite{samBOOK}.
\begin{princ}[$\IND_{3}$]
The induction axiom for formulas $\varphi(n)$ of the form $(\exists f\in \N^{\N})(\forall g\in \N^{\N})(Y(f,g,  n)=0)$ for any $Y^{2}$.  
\end{princ}
We let $\SIND$ be the previous schema with the universal quantifier `$(\forall g\in \N^{\N})$' omitted.  
Like in second-order RM (see \cite{simpson2}*{X4.4}), induction yields bounded comprehension in that $\IND_{3}$ readily\footnote{Apply $\SIND$ to $\varphi(k)$ defined as $(\exists X\subset \N)(\forall n\leq k)(n\in X\asa  (\exists f\in \N^{\N})(Y(f, n)=0))$} implies the following principle. 
\begin{princ}[Bounded comprehension]
For $Y^{2}$ and $k\in \N$, there is $X\subset \N$ with $(\forall n\leq k)(n\in X\asa (\exists f\in \N^{\N})(Y(f, n)=0))$. 
\end{princ}
Finally, we finish this section with a remark on the above conservation results.  
\begin{rem}[Hunter's conservation results]\label{texxchno}\rm
Hunter proves a number of interesting and useful conservation results in \cite{hunterphd}.  We are interested in the proof that $\ACAo+\QFAC^{0,1}$ is  conservative over $\SAC$, 
In Hunter's proof, an arbitrary model $\mathcal{M}$ of $\SAC$ is extended to a (term) model $\mathcal{N}$ of $\ACAo+\QFAC^{0,1}$, with the second-order part of $\mathcal{N}$ isomorphic to $\mathcal{M}$.  
The completeness theorem then yields the required conservation result.  That $\mathcal{N}$ satisfies $\QFAC^{0,1}$ follows from:
\begin{itemize}
\item[(A)] the original model $\mathcal{M}$ and the second-order part of $\mathcal{N}$ satisfy $\SAC$,  
\item[(B)] arithmetical formulas with higher-order parameters are equivalent (in $\mathcal{N}$) to arithmetical formulas involving only second-order parameters (from $\mathcal{M}$).  
\end{itemize}
The items (A) and (B) provide a kind of template for other conservation results.  Indeed, consider Principle \ref{weeralweer} right below, 
which is equivalent to $\ATR_{0}$ when restricted to second-order parameters (see \cite{simpson2}*{V.5.2}).
\begin{princ}\label{weeralweer}
For arithmetical $\varphi$ with $(\forall n\in \N)(\exists \textup{ at most one } X\subset \N)\varphi(X, n)$, there is $Z\subset \N$ with $n\in Z\asa (\exists X\subset \N)\varphi(X, n)$, for all $n\in \N$. 
\end{princ}
To show that $\ACAo$ plus Principle \ref{weeralweer} (involving higher-order parameters) is conservative over $\ATR_{0}$, one simply starts with a model $\mathcal{M}$ of the latter and considers Hunter's term model $\mathcal{N}$. 
The latter is a model of $\ACAo$ by construction and of $\ATR_{0}$ by assumption.  Since $\ATR_{0}$ is equivalent to the second-order version of Principle \ref{weeralweer}, we may use item (B) above to guarantee that $\mathcal{N}$ satisfies Principle~\ref{weeralweer} \emph{involving higher-order parameters}, as required.   
\end{rem}
Hunter's conservation results and variations will be included in \cite{samBOOK}*{Appendix}, with permission of the original author.  

\subsubsection{Spaces, metric and otherwise}
For completeness, we introduce the definitions of metric and second-countable space used in higher-order RM. 

\smallskip

First of all, we introduce the well-known definition of metric space $(M, d)$ to be used in this monograph, namely Definition \ref{donkc}.  
The latter is really the textbook definition with some details like function extensionality made explicit. 
We shall generally only study the case where $M$ is a subset of $\R$, up to coding.  

\smallskip

Now, in our study of metric spaces $(M, d)$, we assume that the set $M$ comes with its own equivalence relation `$=_{M}$' and that the metric $d:M^{2}\di \R$ satisfies 
the axiom of extensionality relative to this relation as follows:
\[
(\forall x, y, v, w\in M)\big([x=_{M}y\wedge v=_{M}w]\di d(x, v)=_{\R}d(y, w)\big).
\]
Similarly to functions on the reals, `$F:M\di \R$' denotes a function that satisfies the following instance of the axiom of function extensionality:
\be\tag{\textup{\textsf{E}}$_{M}$}\label{koooooo}
(\forall x, y\in M)(x=_{M}y\di F(x)=_{\R}F(y)).
\ee
We recall that the study of metric spaces in second-order RM is at its core based on equivalence relations, as discussed explicitly in e.g.\ \cite{simpson2}*{I.4.3} or \cite{damurm}*{\S10.1}.   
%i.e.\ function extensionality relative to $M$.  

\smallskip

Secondly, we now have the following definition of metric space. 
\bdefi[$\RCAo$]\label{donkc}
A functional $d: M^{2}\di \R$ is a \emph{metric on $M$} if it satisfies the following properties for $x, y, z\in M$:
\begin{enumerate}
 \renewcommand{\theenumi}{\alph{enumi}}
\item $d(x, y)=_{\R}0 \asa  x=_{M}y$,
\item $0\leq_{\R} d(x, y)=_{\R}d(y, x), $
\item $d(x, y)\leq_{\R} d(x, z)+ d(z, y)$.
\end{enumerate}
\edefi
To be absolutely clear, we shall study metric spaces $(M, d)$ with $M\subset \N^{\N}$ or $M\subset \R$, unless explicitly stated otherwise. 
Thus, quantifying over $M$ amounts to quantifying over $\N^{\N}$ or $\R$, perhaps modulo coding of finite sequences, i.e.\ the previous definition can be made in third-order arithmetic.   
%Since we shall mostly study \emph{compact} metric spaces, this restriction is minimal as the cardinality of such spaces is at most that of the continuum by \cite{buko}*{Theorem 3.13}.

\smallskip

Thirdly, we introduce the well-known definition of second-countable space to be used in the below, namely as follows. 
%Secondly, our definition of second-countable spaces is given by Definition \ref{CSC}.  
\bdefi[Second-countable spaces]\label{CSC}~
\begin{itemize}
\item A \emph{basis/base} for a topology on $X$ is a collection $(U_{i})_{i\in I}$ and a mapping $k:(X\times I^{2})\di I$ satisfying the following.
\begin{itemize}
\item For every $x\in X$, there is $i\in I$ with $x\in U_{i}$.
\item For $x\in X$ and $i,j\in I$, we have $x\in U_{i}\cap U_{j}\di x\in U_{k(x, i, j)}\subseteq U_{i}\cap U_{j}$.
\end{itemize}
\item A \emph{countable second-countable space} consists of a set $X\subset \N$ and a basis with index set $I=\N$.  We abbreviate this by `CSC-space $X$'.  
\item A \emph{real second-countable space} consists of a set $X\subset \R$ and a basis with index set $I=\N$.  We abbreviate this by `RSC-space $X$'.  
%\item For a second-countable space $X$ with basis $(U_{i})_{i\in \N}$ and $x, y\in X$, we say that `$x$ and $y$ are \(topologically\) indistinguishable' in case $x\in U_{i}\asa y\in U_{i}$ for all $i\in \N$.  We then write `$x\equiv_{X} y$'.
%\item An RSC-space $X$ is \emph{\(strongly\) countable} if $X$ is \(strongly\) countable.  
%a second-countable space that is countable, i.e.\ there is $Y:X\di \N$ such that $(\forall x, y\in X\)( Y(x)=Y(y)\di x=_{\R} y )$.
\end{itemize}
\edefi
Below, we only study second-countable spaces with size at most the continuum, i.e.\ RSC-spaces.
We note that CSC-spaces are second-order objects and have been studied in second-order RM (\cites{dor1,damurm, benham}).  
On a technical note, the mapping $k$ in the definition is included as the latter is generally used in second-order RM.
%Moreover, a `countable RSC space' amounts to a CSC-space with `enumerable set $X\subset \N$' replaced by `countable set $X\subset \R$'. 
%Fo

\section{Countability and representations}\label{defguy}
\subsection{Introduction}\label{gintro}
We establish the subtle interplay between countability and representations.  
In particular, we observe in Section~\ref{othernamez} that the definition of countability of a certain set $X$ crucially hinges on the associated equivalence relation $=_{X}$.  
Armed with this knowledge, we obtain new equivalences for $\BOOT$ and $\QFAC^{0,1}$ for basic principles concerning countable objects (Sections \ref{tonyfox}-\ref{pifpoefpaf}).  

\smallskip

We stress that the combination $\BOOT+\QFAC^{0,1}$ is strong even compared to the `usual' axioms encountered in higher-order RM.    
The principles studied in \cites{dagsamV, dagsamVII, dagsamVI, dagsamX, dagsamXI, samBIG, samBIG2, samBIG3} are much weaker.  
Moreover, properties of \emph{countable} sets of reals generally follow from principles (much) weaker than $\BOOT$, as discussed in Section \ref{tonyfox}.

\smallskip

The principles studied in the below merely constitute examples: we could obtain many more equivalences but have elected to study third-order versions of theorems 
already studied in second-order RM (in some version or other).   

\smallskip

Finally, we note that in Bishop's constructive analysis, a set comes with an equality notion as is clear from this quote from \cites{bish1,bridge1}.  
\begin{quote}
A set is not an entity which has an ideal existence. A set exists only
when it has been defined. To define a set we prescribe, at least implicitly,
what we (the constructing intelligence) must do in order to construct
an element of the set, and what we must do to show that two elements
of the set are equal.
\end{quote}
The connection between RM and constructive mathematics (see e.g.\ \cite{kohlenbach2}) is well-known and our results in this paper 
can be interpreted as providing another link.  

\smallskip

\subsection{Countability by any other name}\label{othernamez}
We recall the definition of countability of sets in Baire space and related spaces and observe the essential nature of the associated equivalence relation.  

\smallskip

First of all, we recall the definition of countability for subsets of $\N^{\N}$, observing that elements of the latter do not involve representations.
In particular, $A\subset \N^{\N}$ is called \emph{countable} if there exists $Y:\N^{\N}\di \N$ that is injective on $A$, i.e.\ we have
\be\label{INJ1}
(\forall f, g\in A)( Y(f)=_{\N}Y(g)\di f=_{1}g ).
\ee
Thus, in case $Y$ maps $f, g\in A$ to the same natural number, then $f$ and $g$ must be the same object in the Baire space, i.e.\ $f=_{1}g$, which means $(\forall n\in \N))(f(n)=_{\N}g(n))$.  

\smallskip

Secondly, we similarly consider the definition of countability for subsets of $\R$, recalling that elements of the latter \emph{do} involve representations, namely as fast-converging Cauchy sequences.
Now, a set $A\subset \R$ is called \emph{countable} if there exists $Y:\R\di \N$ that is injective on $A$, i.e.\ we have
\be\label{INJ2}
(\forall x, y\in A)( Y(x)=_{\N}Y(y)\di x=_{\R}y ).
\ee
We stress the role of the consequent in \eqref{INJ2}:  in case $Y$ maps $x, y\in A$ to the same natural number, then $x$ and $y$ must (only) be the same real number. 
In particular, $x$ and $y$ may be different representations of the same real number, i.e.\ we must have $x=_{\R} y$ but $x\ne_{1}y$ is possible. 

\smallskip

Thirdly, each real number has uncountably many different representations and a set $A\subset \R$ satisfying \eqref{INJ2} is therefore countable \emph{from the point of view of $\R$}, but does contain uncountably many different elements of Baire space, divided among countably many equivalence classes provided by $Y$ as in \eqref{INJ2}.  We stress the essential role of `$=_{\R}$' in the definition of countability as in \eqref{INJ2}:  the equality `$=_{\R}$' is crucial in defining a meaningful notion of countability.  

\smallskip

Fourth, we consider metric spaces $(M, d)$ where we recall that $x=_{M}y$ is equivalent to $d(x, y)=_{\R}0$ by the definition of metric.  
The observation from the previous paragraph of course motivates our definition of countability.  Indeed, a set $A\subset M$ in a metric space $(M, d)$ is countable in case there exists $Y:M\di \N$ such that 
\be\label{INJ3}
(\forall x, y\in A)( Y(x)=_{\N}Y(y)\di x =_{M}y), 
\ee
where we stress the essential role of `$=_{M}$' in the consequent.   
Like for the real numbers, the definition of countability involves countably many equivalence classes that (can) each have uncountably many members that are equal under $=_{M}$. 
To be absolutely clear, we adopt \eqref{INJ3} as it mirrors \eqref{INJ2}, its use of the equivalence relation $=_{M}$ in particular.  

\smallskip

Fifth, we point out a crucial difference between countability for subsets of $\R$ and countability for subsets of metric spaces:  given $\ACAo$, $\R$ comes with a countable dense sub\emph{sequence}, the rationals $\Q$, which provides a method for converting reals into binary representations, i.e.\ each equivalence class has a canonical representative.   By contrast, compact metric spaces do not come with such approximation devices in $\ACAo$ (and much stronger systems) by the results in \cite{samHARD}.         

\smallskip
 
In conclusion, the definitions of countability for subsets of the reals or of metric spaces as in \eqref{INJ2} or \eqref{INJ3} crucially hinges on the associated equivalence relation, namely $=_{\R}$ and $=_{M}$.  
This motivates our use of Definition \ref{trifff}.  %A similar observation can of course be made for \emph{finite} sets, which is the topic of Sect. 
%We do not claim originality here: Bishop (\cite{bish1}) already observes the importance of sets coming with an equality relation.  
 %
%\smallskip

 %\subsection{Graph theory and related areas}
 
\subsection{Countable linear orders}\label{tonyfox}
We study the RM-properties of linear orders that are countable in the sense of Definition \ref{trifff}, often going back to Cantor.    In particular, we shall obtain equivalences involving $\BOOT+\QFAC^{0,1}$.     
 
\smallskip

First of all, we have previously defined a linear order $(X, \preceq_{X})$ to be countable if the set $X\subset \R$ has this property (as a subset of reals); with this natural definition, Cantor's characterisation of countable linear orders 
is equivalent to $\cocode_{0}$ (\cite{dagsamXI}).  
\begin{princ}[$\cocode_{0}$]
For any countable $A\subset \R$, there is a sequence $(x_{n})_{n\in \N}$ that includes all elements of $A$.  
\end{princ}
Now, $\cocode_{0}$ boasts many equivalences, including Principle \ref{weeralweer} (involving higher-order parameters) and the Jordan decomposition theorem (see \cites{samBIG, dagsamXI}).  
Hence, $\ACAo+\cocode_{0}$ is a conservative extension of $\ATR_{0}$ by Remark \ref{texxchno}.  By contrast, $\ACAo+\BOOT$ readily proves $\FIVE$.    

\smallskip

Secondly, the following definition of countable linear order will turn out to be more general than simply considering countable sets of reals.  
We use the usual\footnote{Namely that the relation $\preceq_{X}$ is transitive, anti-symmetric, and connex, like in \cite{simpson2}*{V.1.1}.} definition of \emph{linear ordering} where `$\preceq_{X}$' is given by a characteristic function $F_{X}:\R^{2}\di \{0,1\}$, i.e. `$x\preceq_{X} y$' is shorthand for $F_{X}(x, y)=_{\N}1$.  %while $(X, \preceq_{X})$ is called \emph{\(height\) countable} if $X\subset \R$ is.  

\bdefi[CLO]\label{CLODEF}
 For $X\subset \R$, we say that 
\emph{$(X, \preceq_{X})$ is a CLO} in case $(X, \preceq_{X})$ is a linear order and there is $Y:\R\di \N$ such that 
\be\label{INJU}
(\forall x, y\in X)(Y(x)=_{\N}Y(x)\di x=_{X} y). 
\ee
\edefi
%We have previously defined a linear order $(X, \preceq_{X})$ to be (height-)countable if the set $X\subset \R$ has this property (see Section \ref{dacantor}).  
%While this choice seems natural and yields nice equivalences involving $\enum$, Definition \ref{CLODEF} is more general.  
%We stress the final `$=_{X}$'  in \eqref{INJU} as this use of equivalence classes constitutes the fundamental difference compared to `$X$ is a countable set of reals'.  
We point out that the use of equivalence classes for linear orders is part and parcel\footnote{In particular, consider the definition of `countable ordinal' from \cite{simpson2}*{V.2.10}; in the latter, two countable ordinals are defined to be \emph{equal} if they are isomorphic.  With this definition, $\ATR_{0}$ is equivalent to the statement \emph{the countable linear orders form a linear order} by \cite{simpson2}*{V.6}.} of second-order RM.  

\smallskip

%Secondly, we consider the following theorem where the second item is Cantor's characterisation theorem for CLOs.  
Thirdly, an order-isomorphism from $(X, \preceq_{X})$ to $(Y, \preceq_{Y})$ is a surjective\footnote{Note that \eqref{crange} implies that $(\forall x,x' \in X)( Z(x)=_{Y}Z(x') \di x=_{X}x' )$, i.e.\ $Z$ is injective relative to the equalities `$=_{X}$' and `$=_{Y}$', i.e.\ `surjective' may be replaced by `bijective'.} $Z: X\di Y$ that respects the order relation (see \cite{simpson2}*{Def.\ V.2.7}), i.e.\
\be\label{crange}
(\forall x,x' \in X)(x\preceq_{X} x' \asa Z(x)\preceq_{Y}Z(x')).
\ee
% The reader should verify that using a stronger definition of order-isomorphism does not change the below equivalences. 
In this context, Cantor contributes item (b) from Theorem \ref{kakati} for any countable set, as discussed in \cite{riot}*{p.\ 122-123}. 
Moreover, Cantor introduces the notion of \emph{order type} in \cite{cantorbook90} and characterises the order type $\eta$ of $\Q$ in \cite{cantorm} based on item (c).
%\end{rem}
%The previous remark leads to the following theorem, suggesting that CLOs are much more general than `linear orders that are (height-)countable'. 
%\begin{rem}[$\ACAo$]\rm
%The principle $\BOOT$ follows from the following.  
%\begin{center}
%\emph{A CLO $(X, \preceq_{X})$ with $X\subset \R$ is order-isomorphic to a subset of $\Q$.}
%\end{center}
%We provide a solution in the proof of Theorem \ref{kakati}.
%\end{rem}
%
%Finally, we establish a related theorem motivated by Remark \ref{bailando2}, where a CLO $(X, \prec_{X})$ is a linear order that satisfies \eqref{INJS}.  
\begin{thm}[$\ACAo+\QFAC^{0,1}$]\label{kakati}~ The following are equivalent. 
%$\BOOT+\QFAC^{0,1}$ follows from the following.  
\begin{itemize}
\item[(a)] $\BOOT$.
\item[(b)] A CLO $(X, \preceq_{X})$ with $X\subset \R$ is order-isomorphic to a subset of $\Q$.
\item[(c)] A CLO $(X, \preceq_{X})$ with no endpoints and $X\subset \R$ is order-isomorphic to $\Q$. 
\end{itemize}
\end{thm}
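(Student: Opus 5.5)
The forward directions (a)$\di$(b) and (a)$\di$(c) reduce a CLO to a second-order countable linear order, which can then be handled by the usual second-order construction. The reversals (b)$\di$(a) and (c)$\di$(a) encode an arbitrary $\Sigma$-formula $(\exists f)(Y(f,n)=0)$ into the presence or absence of certain $=_{X}$-classes of a specially built CLO. The two directions meet in the observation that a CLO is essentially a countable linear order on the set of its \emph{occupied indices} $\{n : (\exists x\in X)(Y(x)=n)\}$.

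For (a)$\di$(b), let $(X,\preceq_{X})$ be a CLO with $Y$ as in \eqref{INJU}, where $x=_{X}y$ means $x\preceq_{X}y\wedge y\preceq_{X}x$. Using $\exists^{2}$, the relation `$x\in X\wedge Y(x)=n$' is decidable in $x$ and $n$. Hence $\BOOT$ yields $A=\{n\in\N: (\exists x\in \R)(x\in X\wedge Y(x)=n)\}$. Applying $\QFAC^{0,1}$ to $(\forall n)(\exists x)(n\in A\di [x\in X\wedge Y(x)=n])$ gives a sequence $(x_{n})_{n\in\N}$ of representatives. Next define $n\preceq_{A} m\asa x_{n}\preceq_{X}x_{m}$ and let $A'=\{n\in A: (\forall m<n)(m\in A\di x_{m}\ne_{X}x_{n})\}$, which picks one index per class. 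Then $(A',\preceq_{A})$ is a countable linear order in the second-order sense, and the standard greedy embedding into $\Q$ (already available in $\RCA_{0}$) gives an order-embedding $h:A'\di\Q$. Put $Z(x)=h(n_{x})$, where $n_{x}$ is the $\mu$-least $n\in A'$ with $x_{n}=_{X}x$; such an $n$ exists because $x_{Y(x)}=_{X}x$. Then $Z$ satisfies \eqref{crange} and respects $=_{X}$, and its image is a subset of $\Q$ that exists by $\exists^{2}$. For (c), the order $(A',\preceq_{A})$ inherits the absence of endpoints, so Cantor's back-and-forth argument in $\RCA_{0}$ yields an isomorphism onto $\Q$, and we compose as before.

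For the reversals, fix $Y^{2}$ and view reals (via $\exists^{2}$) as coding triples $(q,n,f)$ with $q\in\Q$ and $n\in\N$. Let $X$ be the set of all codes and define a key $\kappa(q,n,f)=(2n+1,q)$ if $Y(f,n)=0$ and $(2n,q)$ otherwise. Order $X$ by comparing keys lexicographically, with ties among keys counting as $=_{X}$. Then $(\kappa(x))$ coded as a natural number serves as the injection \eqref{INJU}, so $X$ is a CLO. Because the $q$-component runs through $\Q$, every nonempty block is a copy of $\Q$, so there are no endpoints and the order is even dense. Given an isomorphism $Z$ onto $\Q$ (from (c)), or onto a subset of $\Q$ (from (b)), apply $\QFAC^{0,1}$ to surjectivity, i.e.\ to $(\forall r\in\Q)(\exists x\in X)(r\in \textup{range}\di Z(x)=_{\Q}r)$, to obtain a sequence $(x_{r})_{r}$ hitting every $=_{X}$-class. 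Then $(\exists f)(Y(f,n)=0)\asa(\exists r\in \textup{range})(\kappa(x_{r})\textup{ has first component }2n+1)$. The right-hand side is arithmetical, so $\exists^{2}$ yields the set required by $\BOOT$.

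The main obstacle is making this last reversal genuinely go through. The subtleties are in the use of $\QFAC^{0,1}$: the formula must be made quantifier-free via $\exists^{2}$ and must be relativised to the range of $Z$ in case (b), and I must check that an order-isomorphism in the sense of \eqref{crange} indeed lets us read off the occupied classes. A second point to verify is the extensionality of $\preceq_{X}$, $\kappa$ and $Z$ with respect to $=_{\R}$ on the codes. I would handle this by decoding reals to binary representations via $\exists^{2}$ before extracting $(q,n,f)$.
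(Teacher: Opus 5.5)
Your forward direction is the paper's argument with the details filled in: $\BOOT$ for the set of occupied indices, $\QFAC^{0,1}$ for representatives, then the second-order embedding or back-and-forth. One caveat concerns (c). Back-and-forth needs density, not just the absence of endpoints. As printed, (c) omits ``dense'' and is then refuted by $(\mathbb{Z},\leq)$, so it must be read as Cantor's characterisation of $\eta$. Under that reading $(A',\preceq_{A})$ inherits density exactly as it inherits the lack of endpoints, and you should say so explicitly.

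Your reversal is correct but takes a different route from the paper. The paper goes through $\RANGE$. For $G:[0,1]\to\N$ it builds a CLO containing the naturals as fixed separators; the points $y\in(m+1,m+2)$ with $G(y-(m+1))=m$ form a single $=_{X}$-class between the separators $m+1$ and $m+2$. Then $m$ is in the range of $G$ iff some $q\in Q$ lies strictly between $Z(m+1)$ and $Z(m+2)$. This is arithmetical in the image set $Q$ and two values of $Z$, so no choice is used. Hence (b)$\to$(a) already holds over $\ACAo$, the only strength drawn from (b) being that the image $Q$ exists as a set.

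You instead encode $\BOOT$ directly through the keys $\kappa$. Having no separators, you must pull back representatives of all $=_{X}$-classes by applying $\QFAC^{0,1}$ to surjectivity. That is legitimate since $\QFAC^{0,1}$ is in the base theory. Your observation that $Z(x_{r})=Z(x)$ forces $\kappa(x_{r})=\kappa(x)$ is exactly what makes the read-off work.

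What you gain is uniformity. Your order is a sum of copies of $\Q$, hence dense without endpoints, so one construction yields both (b)$\to$(a) and (c)$\to$(a). The paper's order has least element $0$ and is not dense, since $0$ and $1$ are adjacent. Its argument therefore only covers (b)$\to$(a) and leaves (c)$\to$(a) unaddressed. The technical points you list (making the choice formula quantifier-free via $\exists^{2}$, extensionality via canonical binary expansions) are routine and go through as you indicate.
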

\begin{proof}
Given a CLO $(X, \preceq_{X})$ with $X\subset \R$, $\BOOT$ allows us to define the range of the associated $Y:X\di \N$ on $X$ that satisfies \eqref{INJU}.
%\be\label{INJS}
%(\forall x, y\in X)(Y(x)=Y(x)\di x=_{X} y). 
%\ee
Then use $\QFAC^{0,1}$ to pick an element from each equivalence class of the range of $Y$ on $X$.  To the resulting sequence, apply the usual `back-and-forth' argument from \cite{riot}*{p.\ 123}. 
%For the remaining direction, the proof of Theorem~\ref{wajong} is readily adapted to obtain $\BOOT$ from the second item, namely via a straightforward modification of \eqref{perfide} and the associated set $R$.  

\smallskip

For the other direction, $\BOOT$ is equivalent to $\RANGE$ as follows:
\be\label{myhunt}\tag{$\RANGE$}
(\forall G:\N^{\N}\di \N)(\exists X\subset \N)(\forall n \in \N)\big[n\in X\asa (\exists f\in \N^{\N})(G(f)=n)  ].
\ee
Indeed, the forward direction is immediate, while for the reverse direction, define $G^{2}$ as follows for $n^{0}$ and $g^{1}$: put $G(\langle n\rangle *g)=n+1$ if $Y(g, n)=0$, and $0$ otherwise. 
Let $X\subseteq \N$ be as in $\RANGE$ and note that 
\[
(\forall m \geq 1 )( m\in X \asa (\exists f\in \N^{\N})(G(f)=m)\asa (\exists g\in \N^{\N})(Y(g, m-1)=0)  ),
\]
which is as required for $\BOOT$ after trivial modification. 

\smallskip

In light of the equivalence with $\RANGE$, fix $G:[0,1]\di \N$ and note that Feferman's $\mu$ suffices to list all natural numbers $n\in \N$ with $(\exists q\in \Q\cap [0,1])(G(q)=n)$.  
%Wlog we may assume that $ A\cap \Q=\emptyset$ as Feferman's $\mu$ can enumerate $A\cap \Q$.  
Now define the set $X\subset \R$ as follows:  $y\in X$ if and only if
\[
(\exists n\in \N)(y=_{\R}n)\vee (\exists m\in \N)[ m+1< y <m+2 \wedge  G(y-(m+1))=m] .
\]
For $x, y\in X$, we put `$x=_{X} y$' if $x=_{\N} y$ or there is $m\in \N$ with 
\[
m+1<x, y<m+2\wedge G(x-(m+1))=G(y-(m+1))=m.
\]
We also put `$x\prec_{X} y$' if $x <_{\R} y  \wedge x\ne_{X} y$.  Clearly, $G$ readily yields $Y:\R\di \N$ satisfying \eqref{INJU}, i.e.\ $(X, \preceq_{X})$ is a CLO.    
Apply the second item to obtain $Q\subset \Q$ and $Z:\R\di \Q$ such that $Z$ is an order-isomorphism from $(X, \leq_{X})$ to $(Q, \leq_{\Q})$. 
%Clearly, the set $R$ is height countable and $(R, \leq_{\R})$ is a linear order.  Apply item \eqref{cloq} to obtain $Q\subset \Q$ and $Z:R\di \Q$ such that $Z$ is an order-isomorphism from $(R, \leq_{\R})$ to $(Q, \leq_{\Q})$.  
Now consider the following formula where $n\in \N$:
\begin{align}
(\exists x\in [0,1])(G(x)=n)
&\asa (\exists y\in  (n, n+1))(y\in X)\notag\\
&\asa (\exists q\in Q)(Z(n)<_{\Q}q <_{\Q}Z(n+1)).\label{perfideg}
\end{align}
The first equivalence holds by the definition of $X$, while the second equivalence follows from the fact that $Z$ is an order-isomorphism. 
Since \eqref{perfideg} is decidable given $(\exists^{2})$, we can define the range of $G$, as required for (the real-valued variation of) $\RANGE$.  
Up to coding, this yields $\RANGE$ as required.  
%Up to coding, this yields $\BOOT$ as the latter is equivalent to $\RANGE$ (Theorem \ref{rage}), which in turn expresses the existence of the range of mappings from Baire space to $\N$.
%To obtain $\QFAC^{0,1}$, fix quantifier-free $\varphi$ such that $(\forall n\in \N)(\exists f\in 2^{\N})\varphi(n, f)$.  
%Assuming some fixed coding, we define $X$ as the set of reals that code a finite sequence $w^{1^{*}}$ such that $(\forall i<|w|)\varphi(i, w(i))$.  
%We put `$x\preceq_{X}y$' if $|w|<|v|$ for the finite sequences $w, v$ coded by respectively $x, y$.  
%Then $(X, \preceq_{X})$ is a CLO and the order-isomorphism provided by the second item readily yields the required choice function for $\varphi$.  % namely similar to the proof of Theorem \ref{wajong}.
\end{proof}
On a conceptual note, the order type $\eta$ of $\Q$ appears throughout second-order RM.  However, Cantor's characterisation of $\eta$ behaves as follows:
\begin{itemize}
\item using the notion of CLO, Cantor's characterisation of $\eta$ is equivalent to $\BOOT+\QFAC^{0,1}$ (Theorem \ref{kakati}),
\item using `sets of reals that are countable' as in \eqref{INJ2}, Cantor's characterisation of $\eta$ is equivalent to $\cocode_{0}$ (\cite{dagsamXI}).
\end{itemize}
% is equivalent to $\BOOT$, which is a third-order version of $(\exists^{3})$.  
Now, $\ACAo+\BOOT$ proves $\FIVE$ while $\ACAo+\cocode_{0}$ is conservative over $\ATR_{0}$ by Remark \ref{texxchno}, i.e.\ the notion of CLO yields much stronger principles than `linear order on a countable set of reals'.
% and hence explosive by the previous and Theorem \ref{zeplo}. 

\smallskip

Next, we show that the existence of inverses in Theorem \ref{kakati} yields countable choice (Corollary \ref{kakaticor2}).  
For countable sets of reals, the inverses do not yield countable choice by Theorem \ref{kakaticor3}. 
\begin{cor}[$\ACAo$]\label{kakaticor2} The following are equivalent. 
%$\BOOT+\QFAC^{0,1}$ follows from the following.  
\begin{itemize}
\item[(a)] $\BOOT+\QFAC^{0,1}$.
\item[(b)] A CLO $(X, \preceq_{X})$ with $X\subset \R$ is order-isomorphic \emph{with an inverse} to a subset of $\Q$.
\item[(c)] A CLO $(X, \preceq_{X})$ with no endpoints and $X\subset \R$ is order-isomorphic \emph{with an inverse} to $\Q$. 
\end{itemize}
\end{cor}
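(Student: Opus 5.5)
The proof splits into (a) $\to$ (b),(c), then (b) $\to$ (a) and (c) $\to$ (a); the main new work is extracting $\QFAC^{0,1}$ from the inverse.

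\textbf{Forward direction.} We work in $\ACAo$ and run the proof of Theorem~\ref{kakati}, now with $\QFAC^{0,1}$ as a hypothesis. Given a CLO $(X,\preceq_X)$ with $Y$ as in \eqref{INJU}, $\BOOT$ yields the range $R\subseteq\N$ of $Y$ on $X$. For each $n\in R$ we have $(\exists x\in\R)(x\in X\wedge Y(x)=n)$. Using $\exists^2$, the matrix is quantifier-free relative to a functional parameter. Hence $\QFAC^{0,1}$ gives a sequence $(x_n)_{n\in R}$ of representatives, one per equivalence class. The back-and-forth argument on this sequence yields an order-isomorphism, defined on $X$ by $Z(x):=q_{Y(x)}$. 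The sequence itself is then the inverse: $q_n\mapsto x_n$ is a map from $Q$ (respectively $\Q$) to $X$, arithmetical in the data. Item (c) is identical, since back-and-forth onto all of $\Q$ works for orders without endpoints.

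\textbf{Reverse direction: $\BOOT$.} Both (b) and (c) imply the corresponding items of Theorem~\ref{kakati}, so they yield $\BOOT$ by that theorem. One caveat: Theorem~\ref{kakati} is stated over $\ACAo+\QFAC^{0,1}$, and we must check that the derivation of $\BOOT$ from (b)/(c) used no choice. Inspecting the proof, it does not: that part only uses $\exists^2$ and the decidability of \eqref{perfideg}. For (c) one applies the same construction, but must make the order have no endpoints. To do this, I would interleave copies of $\Q$ between the blocks, e.g.\ add all rationals in $(n+\frac{1}{2},n+1)$ and all negative rationals as separate $=_X$-classes; these are enumerable, so $Y$ stays injective.

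\textbf{Reverse direction: $\QFAC^{0,1}$.} Fix a quantifier-free $\varphi$ with $(\forall n)(\exists f)\varphi(f,n)$. Define $X$ as the union of the naturals $n$ together with those $y\in(n,n+1)$ that code some $f\in\N^\N$ with $\varphi(f,n)$. Use a homeomorphism between Baire space and the irrationals in $(0,1)$, computable with $\exists^2$, so that every $f$ is coded. Declare all such $y$ in $(n,n+1)$ to be $=_X$-equal, and order by $<_\R$ across classes. Then $Y(y)=2n+1$ on the $n$-th block and $Y(n)=2n$ witnesses that $X$ is a CLO; each block is nonempty by hypothesis. Let $W:Q\to X$ be the inverse of the isomorphism given by (b). Its values $W(q)$ on the $q$ lying strictly between $Z(n)$ and $Z(n+1)$ are elements of block $n$. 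With $\exists^2$ we search $\Q$ for the least such $q$ (there is exactly one class in between) and decode $f_n$ from $W(q)$. For (c) we again pad with rational classes so that there are no endpoints; the decoding is unchanged, except that we must single out the unique non-padding class, which is decidable.

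\textbf{Main obstacle.} The delicate point is that $W(q)$ must be an actual element of $X$ satisfying $x\in X$, rather than merely something $=_X$-equal to one. Only then does decoding give a genuine witness of $\varphi(\cdot,n)$. I would make sure the definition of ``inverse'' in (b) requires $W:Q\to X$ with $Z(W(q))=q$, which forces $W(q)\in X$.
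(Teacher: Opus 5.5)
Your argument for (a)$\leftrightarrow$(b) is essentially the paper's. In the forward direction, the paper takes the isomorphism from Theorem~\ref{kakati} and applies $\QFAC^{0,1}$ again to its surjectivity, whereas you read the inverse directly off the chosen representatives. In the reversal, both use the block construction of Theorem~\ref{kakati}: Feferman's $\mu$ locates the element of $Q$ between $Z(n)$ and $Z(n+1)$, and the inverse turns it into a witness. The only difference is that you encode $\varphi$ directly rather than passing through $G$ and `coding of reals'. Your check that the $\BOOT$-half of that reversal uses no choice is correct; the paper does not spell it out.

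The one real gap concerns item (c), and it is inherited from the statement itself. Your claim that back-and-forth onto all of $\Q$ works for orders without endpoints is false without density. Take $X=\{y\in\R:(\exists z\in\mathbb{Z})(y=_{\R}z)\}$ with $\leq_{\R}$, and let $Y$ send $y$ to (a code of) the nearest integer. This is a CLO without endpoints. Yet an order-isomorphism $Z$ onto $\Q$ would have to hit some $q$ with $Z(0)<q<Z(1)$, which forces an element of $X$ strictly between $0$ and $1$. So (c) as literally stated is refutable in $\ACAo$, and (a)$\di$(c) fails. The paper's statement and proof (which simply defer to Theorem~\ref{kakati}) have the same omission; `dense' must be added, as in Cantor's characterisation of $\eta$.

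With that fix, your forward argument for (c) goes through. However, both reversals from (c) then need a padded CLO that is \emph{dense} and without endpoints, and your suggested padding is neither suitable nor dense. The rationals of $(n+\frac{1}{2},n+1)$ will in general interleave with block elements in the same interval. A block is then no longer a convex single $=_X$-class, and `ordering by $<_{\R}$ across classes' is not even a linear order. Even if the blocks are moved to $(n,n+\frac{1}{2})$, nothing lies strictly between $n$ and its block, so the order is not dense. A working choice is as follows: put the naturals at $3n$, the $n$-th block inside $(3n+1,3n+2)$, and all rationals of $(-\infty,0)$, $(3n,3n+1)$ and $(3n+2,3n+3)$ as singleton classes. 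This order is dense without endpoints whether or not a block is empty. With the inverse $W$, you detect and decode the $n$-th block via $\mu$ as the $q\in\Q$ with $Z(3n)<q<Z(3n+3)$ and $W(q)$ not a padding point.
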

\begin{proof}
That item (a) proves the other items follows from the theorem and Theorem~\ref{kakaticor3}, which is proved without reference to this corollary.  Indeed, instead of using the enumeration of $X$ as in the proof of Theorem \ref{kakaticor3}, one simply applies $\QFAC^{0,1}$ to the statement that the order isomorphism is surjective. 
For the reversal, consider \eqref{perfideg} and observe that $(\forall n\in \N)(\exists x\in [0,1])(G(f)=n)$ implies $(\forall n\in \N)(\exists q\in Q)(G(Z^{-1}(q))=n)$ where $Z^{-1}$ is the inverse of $Z$.  
Since $Q\subset \Q$, Feferman's $\mu$ yields a choice function for $(\forall n\in \N)(\exists  x\in [0,1])(G(f)=n)$.  From this, $\QFAC^{0,1}$ readily follows (via coding of reals) as required.  
\end{proof}
We recall that $\cocode_{0}$ is provable in $\Z_{2}^{\Omega}$ and hence $\ZF$, in contrast to $\QFAC^{0,1}$. 
\begin{thm}[$\ACAo$]\label{kakaticor3} The following are equivalent. 
\begin{itemize}
\item[(a)] $\cocode_{0}$.
\item[(b)] A linear order $(X, \preceq_{X})$ with countable $X\subset \R$ is order-isomorphic to a subset of $\Q$.
\item[(c)] A linear order $(X, \preceq_{X})$ with countable $X\subset \R$ is order-isomorphic \emph{with an inverse} to a subset of $\Q$.
%\item[(c)] A CLO $(X, \preceq_{X})$ with no endpoints and $X\subset \R$ is order-isomorphic \emph{with an inverse} to $\Q$. 
\end{itemize}
\end{thm}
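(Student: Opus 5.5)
We prove (a)$\di$(c)$\di$(b)$\di$(a) over $\ACAo$.

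\emph{(a)$\di$(c).} Let $(X,\preceq_{X})$ be a linear order with $X\subset\R$ countable, witnessed by $Y$ as in \eqref{INJ2}. By $\cocode_{0}$ there is a sequence $(x_{n})_{n\in\N}$ listing all elements of $X$, and $(\exists^{2})$ lets us discard terms not in $X$. Duplicates do no harm: using $\exists^{2}$ we pass to the subsequence of those $x_{n}$ with $x_{n}\ne_{\R}x_{m}$ for all $m<n$. Here equality is $=_{\R}$ and the order is given by the characteristic function $F_{X}$, so $(\exists^{2})$ makes the restriction of $\preceq_{X}$ to this sequence a set of naturals. This gives a countable linear order in the second-order sense, and the standard $\RCA_{0}$-construction embeds it into $\Q$ by placing $x_{n}$ greedily by recursion on $n$. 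This yields a sequence $(q_{n})$ with $x_{n}\preceq_{X}x_{m}\asa q_{n}\leq q_{m}$, and the image $Q=\{q_{n}:n\in\N\}$ exists by $\mu^{2}$.

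To define $Z$ on all of $X$ we need, for $x\in X$, an index $n$ with $x=_{\R}x_{n}$. Such an index exists, and $\mu^{2}$ finds the least one, since $x=_{\R}x_{n}$ is $\Pi^{0}_{1}$ and hence decidable by $\exists^{2}$. Put $Z(x):=q_{n}$ for that least $n$. This $Z$ is extensional for $=_{\R}$, respects the order, and is surjective onto $Q$. The inverse is given directly by $Z^{-1}(q_{n}):=x_{n}$, computed by searching for $n$ with $q_{n}=q$, which $\mu^{2}$ can do. So no choice is needed, and this is exactly why (c) holds here while the CLO version in Corollary \ref{kakaticor2} needs $\QFAC^{0,1}$.

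\emph{(c)$\di$(b)} is trivial.

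\emph{(b)$\di$(a).} Let $A\subset\R$ be countable via $Y$. Without loss of generality, working through a homeomorphism computable from $\exists^{2}$, we may take $A\subset(0,1)$. Consider the linear order $(A,\leq_{\R})$ and apply (b) to get $Q\subset\Q$ and an order-isomorphism $Z:A\di Q$. Then $Q$ is a set of rationals, hence enumerable. For each $q\in Q$ we must recover some $x\in A$ with $Z(x)=q$ \emph{without} choice; this is the main obstacle.

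The first idea is to recover $x$ as $\sup\{y\in A: Z(y)\le q\}$. This supremum is not obviously definable, since $A$ is a higher-order set.

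Instead, use the order structure of $\R$ rather than $A$. For a rational interval $(r,s)$, the statement $(\exists y\in A)(r<y<s)$ is uniform in $(r,s)$ but not arithmetical. Since $Z$ is an isomorphism into $\Q$, the natural move is to pick the right linear order so that its isomorphic copy computes what is needed. To that end we could vary the construction: apply (b) to the order $\preceq$ on $A$ defined via the binary expansion of $x$. Then $Z(x)$ determines $x$ digit by digit through comparisons of $Z(x)$ with images of elements of $A$ lying in dyadic intervals. The relevant sets $\{Z(y): y\in A\cap[d,d']\}$ must be obtained as sets using $Q$ and $\exists^{2}$.

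If this fails, the fallback is the route of \cite{dagsamXI}. That paper shows Cantor's characterisation for countable $X$ implies $\cocode_{0}$ by a reduction of $\cocode_{0}$ to the statement for $\Q$-like orders, and that reduction would be adapted here with $\Q$-subsets. The key step I expect to be hard is converting the embedding $Z$ into an enumeration of $A$ without $\QFAC^{0,1}$. The intended method is to reconstruct each preimage real as a limit of rationals defined arithmetically from $Q$ and $Z$ restricted to a countable dense set of $A$-comparisons.
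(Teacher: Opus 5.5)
\textbf{Gap: the direction (b)$\di$(a) is not proved.} Your (a)$\di$(c) is correct and is essentially the paper's argument. Once $\cocode_{0}$ lists $X$, surjectivity of $Z$ becomes the arithmetical statement $(\forall q\in Q)(\exists n\in\N)(Z(x_{n})=q)$, so $\mu^{2}$ produces $Z^{-1}$ without choice. The paper takes the embedding itself from \cite{dagsamXI} and only adds the inverse, whereas you build it from the enumeration by the forth argument; that is fine. With (c)$\di$(b) trivial, everything rests on (b)$\di$(a), and there your proposal gives no argument.

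The routes you sketch do not work as stated:
\begin{itemize}
\item Recovering preimages of $Z$ is a search over reals that $\mu^{2}$ cannot perform.
\item An order on $A$ via canonical binary expansions, read lexicographically, is just $\leq_{\R}$ again.
\item The sets $\{Z(y): y\in A\cap[d,d']\}$ are defined by a quantifier over the reals in $A$. They are visible through $Q$ only if $d,d'$ are themselves points of the order, and then the other endpoints lying between them spoil the reading. Declaring these sets obtainable from $Q$ and $(\exists^{2})$ presupposes essentially what $\cocode_{0}$ gives.
\end{itemize}
The fallback to \cite{dagsamXI} is a pointer, not a proof. The paper itself simply cites \cite{dagsamXI} for (a)$\asa$(b), so a bare citation would have matched it.

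\textbf{The missing idea.} Apply (b) once to an auxiliary order containing explicit marker points, so that $Z$ is only ever evaluated at known reals. Use $Y$ to make every queried piece of $A$ a singleton. This is the device from the reversal in Theorem~\ref{kakati}.

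Enumerate the rational open intervals as $(r_{k},s_{k})_{k\in\N}$. For $j=\langle n,k\rangle$, place in the slot $(2j+1,2j+2)$ the affine copy of $\{x\in A: Y(x)=n\wedge r_{k}<x<s_{k}\}$. Let $X$ be the union of all slots together with $\N$. Using $(\exists^{2})$, $X$ is a set, and it is countable: map $m\in\N$ to $2m$, and a point of slot $j$ coming from $x\in A$ to $2\langle j,Y(x)\rangle+1$.

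Applying (b) to $(X,\leq_{\R})$ yields $Z$ and $Q'\subset\Q$. Exactly as in \eqref{perfideg}, slot $j$ is non-empty iff $(\exists q\in Q')(Z(2j+1)<q<Z(2j+2))$, which $(\exists^{2})$ decides. Surjectivity of $Z$ is used only to assert that a preimage exists, never to find one. This gives the set $S$ of all $\langle n,k\rangle$ with $(\exists x\in A)(Y(x)=n\wedge r_{k}<x<s_{k})$.

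By injectivity of $Y$ on $A$, each $n$ in the range of $Y$ on $A$ has a unique such $x$ up to $=_{\R}$. Moreover, $S$ lists exactly the rational intervals containing that $x$, so $\mu^{2}$ computes it as a limit of shrinking intervals. Filling the remaining indices with one fixed element of $A$ gives the sequence required by $\cocode_{0}$.

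This last step is where countability of $A$ as a set of reals is used. For a CLO the classes are not singletons of $\R$, so the same device only gives $\BOOT$, and picking representatives costs $\QFAC^{0,1}$.
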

\begin{proof}
The equivalence between the first two items is proved in \cite{dagsamXI}.  
To prove the final item, note that $\cocode_{0}$ yields an enumeration $(x_{n})_{n\in \N}$ of any countable set $X\subset \R$.  An order isomorphism $Z:X\di Q$ for $Q\subset \Q$ is surjective by definition, i.e.\ $(\forall q\in Q)(\exists x\in X)(Z(x)=q )$.  However, using the enumeration of $X$, the latter becomes $(\forall q\in Q)(\exists n\in \N)(Z(x_{n})=q )$ and the inverse $Z^{-1}:Q\di X$ can then be defined as follows using Feferman's $\mu$: $Z^{-1}(q)$ is some $x_{n}$ with $Z(x_{n})=q$.
\end{proof}
%Next, we consider item (c) in Theorem \ref{kakati} \emph{restricted} to $X\subset \R$ that are countable as a subset of reals, i.e.\ \eqref{INJ2} holds for some $Y$ and $A=X$. 
%One then readily\footnote{Note that $\cocode_{0}$ yields an enumeration $(x_{n})_{n\in \N}$ of any countable set $X\subset \R$.  An order isomorphism $Z:X\di Q$ for $Q\subset \Q$ is surjective by definition, i.e.\ $(\forall q\in Q)(\exists x\in X)(Z(x)=q )$.  However, using the enumeration of $X$, the latter becomes $(\forall q\in Q)(\exists n\in \N)(Z(x_{n})=q )$ and the inverse $Z^{-1}:Q\di X$ can then be defined as $Z^{-1}(q)$ as some $x_{n}$ with $Z(x_{n})=q$.\label{einfacherals}} defines the inverse of the order isomorphism at hand, say in $\ACAo+\cocode_{0}$.  The situation is different if we use the notion of countability as in Definition \ref{trifff} by Corollary \ref{kakaticor2}.  
%Indeed, $\QFAC^{0,1}$ is not provable in $\ZF$ while $\cocode_{0}$ is.     
%
%
%\noindent
%The following exercise is an interesting variation of Theorems \ref{wajong} and \ref{kakati}.
%\begin{rem}[$\ACAo+\IND_{2}$]\rm ~
%\begin{itemize}
%\item Show that $\enum$ is equivalent to the generalisations of items (a) and (b) from Theorem \ref{wajong} that state the existence of inverses for the order morphisms at hand. 
%\item Show that the existence of an inverse for the order morphism in the second item of Theorem \ref{kakati} yields countable choice as in $\QFAC^{0,1}$.  
%\end{itemize}
%\end{rem}
%
%\smallskip
%
Next, we also briefly study well-orderings as in the following (usual) definition.  
\bdefi[Well-orderings]~
\begin{itemize}
\item A \emph{well-ordering} $(X, \prec_{X})$ is a well-founded linear order, i.e.\ it has no strictly descending sequences. 
\item A well-ordering $(X, \preceq_{X})$ with $X\subset \N^\N$ is a \emph{cwo} in case there exists $Y:\N^{\N}\di \N$ satisfying \eqref{INJU}.
\end{itemize}
\edefi   
We have the following result on the comparability of well-orderings, a big topic in second-order RM.   The second-order version of the centred statement, called $\CWO$, is equivalent to $\ATR_{0}$ by \cite{simpson2}*{V.6.8}. 
\begin{thm}[$\ACAo+\QFAC^{0,1}+\SIND$]\rm
The principle $\BOOT$ is equivalent to:
\begin{center}
\emph{For cwos $(X, \preceq_{X}) $ and $(Y, \preceq_{Y})$ with $X, Y\subset \R$, the former order is order-isomorphic to the latter or an initial segment of the latter, or vice versa.}
\end{center}
%Modify the proof of Theorem \ref{CWOOO} if necessary.  
\end{thm}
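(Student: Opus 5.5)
The plan mirrors the proof of Theorem~\ref{kakati}: derive the comparability statement from $\BOOT$ by reducing to the second-order principle $\CWO$, and recover $\BOOT$ via $\RANGE$ by coding the range of a functional into a pair of cwos.

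\emph{Forward direction.} Let $(X,\preceq_X)$ and $(Y,\preceq_Y)$ be cwos with injections $Y_1,Y_2:\R\di\N$ as in \eqref{INJU}.
\begin{enumerate}
\item Use $\BOOT$ to form the ranges $R_1=\{n:(\exists x\in X)(Y_1(x)=n)\}$ and $R_2$ similarly.
\item Apply $\QFAC^{0,1}$, after the usual coding of reals, to pick a representative $x_n\in X$ for each $n\in R_1$, and likewise $y_n\in Y$ for $n\in R_2$.
\item Using $(\exists^2)$, define countable well-orders on $R_1$ and $R_2$ by $n\prec m\asa x_n\prec_X x_m$, and similarly on $R_2$. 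Well-foundedness transfers, since a descending sequence in $R_1$ gives one in $X$.
\end{enumerate}
These are genuine second-order countable well-orderings. Since $\ACAo+\BOOT$ proves $\FIVE$, and hence $\ATR_0$, the second-order $\CWO$ yields a comparison map, say $h:R_1\di R_2$ onto an initial segment. Lift it to $Z(x)=y_{h(Y_1(x))}$. This $Z$ respects $=_X$ and the orders, and is surjective onto an initial segment of $Y$ because every $y\in Y$ is $=_Y$-equal to $y_{Y_2(y)}$.

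\emph{Reversal.} By the proof of Theorem~\ref{kakati}, it suffices to derive $\RANGE$ for $G:[0,1]\di\N$.
\begin{enumerate}
\item Build a cwo $X$ as in that proof. The points are the naturals $n$, together with the reals $y\in(m+1,m+2)$ satisfying $G(y-(m+1))=m$. Equality $=_X$ and the order are inherited from $\R$ modulo $=_X$, so each nonempty block is one point. The order type is $\omega$ with some points interpolated, hence a well-order, and $G$ supplies the injection.
\item Take the second cwo to be $(\N,\leq)$, which is of type $\omega$.
\end{enumerate}
Comparability gives an isomorphism between $X$ and an initial segment of $\omega$, or vice versa. Because $X$ is infinite, in either case the map is essentially an isomorphism $Z$ between $X$ and $\omega$; if it goes from $\omega$ onto an initial segment of $X$, that segment must be all of $X$. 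Then $(\exists x\in[0,1])(G(x)=m)$ holds iff $Z(m+2)-Z(m+1)>1$, computed in the appropriate direction. This is decidable by $(\exists^2)$, so the range exists.

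The main obstacle is this last step, in two respects. First, one must verify within $\ACAo+\QFAC^{0,1}+\SIND$ that $X$ is well-founded and not of larger order type. Second, one must show that the isomorphism rigidly identifies the $n$-th point, which requires showing by induction that $Z$ maps the $k$-th element to the $k$-th element. That statement about $X$ has the form $\Sigma$ with a function quantifier, which is precisely where $\SIND$ is needed. If $\SIND$ is too weak for the ``vice versa'' case, the fallback is to use bounded comprehension, which follows from $\SIND$, to decide the relevant finite initial segment of the range directly.
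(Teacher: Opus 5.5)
Your proof is correct. The forward direction is the paper's argument: $\BOOT$ and $\QFAC^{0,1}$ give enumerations, $\ACAo+\BOOT$ proves $\FIVE$ and hence $\CWO$, and the comparison map is lifted back. It needs one small repair. The injection $Y_1$ of a cwo need not respect $=_X$, so $n\prec m\asa x_n\prec_X x_m$ is only a linear preorder on $R_1$. Keep just the least index in each $=_X$-class, which is a set by $(\exists^2)$, before applying $\CWO$.

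Your reversal takes a genuinely different route. The paper uses $X=\N^\N$ ordered by $G$-value, with the least value $n_0$ of the range moved to the top, so $X$ has type $\omega+1$. Comparing with $(\N,\leq)$ then leaves only an isomorphism $Z$ from $\N$ onto the segment below the top, and the range is $\{n_0\}\cup\{G(Z(k)):k\in\N\}$. The top point is there to force the map out of $\N$, since a map from $\N^\N$ to $\N$ cannot be queried at unknown points. The price is $\SIND$, used to find $n_0$ and to handle bounded $G$ via bounded comprehension. You instead reuse the type-$\omega$ order from Theorem~\ref{kakati}, whose naturals act as markers at which the comparison map can be evaluated. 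Either direction of the map therefore suffices, and your reversal needs only $(\exists^2)$. Your forward direction does not use $\SIND$ either, so your route removes $\SIND$ from the base theory.

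The step you flag as the main obstacle is not one, and no induction identifying the $k$-th point is needed.
\begin{itemize}
\item Suppose $Z:X\di\N$ is the isomorphism and $Z(m+1)<k<Z(m+2)$. Surjectivity gives $x\in X$ with $Z(x)=k$, hence $m+1\prec_X x\prec_X m+2$, so $x$ lies in block $m$. The converse is just order preservation.
\item If instead $Z:\N\di X$ is onto, block $m$ is nonempty iff $(\exists k\in\N)(m+1<Z(k)<m+2)$, which is arithmetical in $Z$.
\item Ruling out the proper-initial-segment cases only uses two facts: $X$ contains all naturals, and your injection is bounded on each $\{x\in X:x\prec_X x_0\}$. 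Finite pigeonhole then finishes, and the same argument gives well-foundedness.
\item The blocks only see $G$ on $(0,1)$, so add $G(0)$ and $G(1)$ to the range by hand.
\end{itemize}
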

\begin{proof}
To derive the centred statement from $\BOOT$, use the latter plus $\QFAC^{0,1}$ to obtain enumerations of the cwos involved. 
Clearly, $\ACAo+\BOOT$ proves $\FIVE$ and the latter implies $\CWO$ by \cite{simpson2}*{V.6.8}.  % while Remark \ref{LEM} guarantees $(\exists^{2})$ is available. 
Hence, we can apply $\CWO$ to the aforementioned enumerations and obtain the centred statement. 
%The proof of $\enum\di \IND_{0}$ is straightforward.  

\smallskip

For the other direction, we shall prove $\RANGE$ from the proof of Theorem \ref{kakati}, i.e.\ fix some $G:\N^{\N}\di \N$.  
In case $(\exists m\in\N)(\forall f\in \N^{\N})(G(f)\leq m)$, $\SIND$ readily yields the range of $G$.
% as we have 
%\[
%(\forall n\in \N)(\exists \textup{ at most one } x\in [0,1])(x\in A\wedge Y(x)=n).
%\]    
Hence, we may assume $(\forall m\in \N)(\exists f\in \N^{\N})(G(f)> m)$.  
Now define the linear order $(X, \preceq_{X})$ where $X=\N^{\N}$ and
\[
x\preceq_{X} y \equiv \big[G(y)=n_{0} \vee [  G(x)\ne n_{0}\wedge G(x)\leq_{\N}G(y)   ]\big],
\]
where $n_{0}\in \N$ is the least $n\in \N$ such that $(\exists f\in \N^{\N})(G(f)=n)$; this number is readily defined using $\SIND$. 
Let $y_{0}\in X$ be such that $G(y_{0})=n_{0}$
The functional $G$ is such that $G(x)=G(y)$ implies $x=_{X}y$ for any $x, y\in X$, i.e.\ $(X, \preceq_{X})$ is a cwo.  
%Let $y_{0}, \dots, y_{k}\in A$ be all $y\in A$ with $H(y)=n_{0}$, in ascending order.
Intuitively, $(X,\preceq_{X})$ has order type $\omega+1$, i.e.\ the order of $\N$ followed by $1$ element.  
Hence, of the different possibilities provided by the centred statement, all-but-one to contradiction.  Indeed, a finite initial segment of either $(\N, \leq_{\N})$ or $(X, \preceq_{X})$ has only got finitely many elements, while $\N$ is infinite and $X$ satisfies $(\forall m\in \N)(\exists x\in X)(G(x)\geq m)$.  Similarly, an order-isomorphism $W:X\di \N$ leads to contradiction as follows: since there is $y_{0}\in X$ such that $W(y_{0})=n_{0}$, there cannot be a injection from $X\setminus \{y_{0}\}$ to $\{0, 1,\dots, W(y_{0})\}$, as the latter set is finite, while the former is not. 
Similarly, an order-isomorphism $Z:\N\di W$ yields a contradiction as any $n\geq n_{0}$ is mapped below $Z(n_{0})\in X$ (relative to $\preceq_{X}$), which is not possible.  
The only remaining possibility is an order-isomorphism $Z: \N\di X\setminus \{y_{0}\}$, where the latter is the initial segment $\{ y\in X: y \prec_{X} y_{0} \}$.  
The mapping $Z$ allows us to define the range of $G$, as required, and we are done.  
\end{proof}
%By Theorem \ref{wajong}, the second item of Theorem \ref{kakati} restricted to countable sets of reals is (only) equivalent to $\enum$, while one obtains $\BOOT$ for CLOs.  
In light of the above results, the notion of countability based on \eqref{INJU} seems (much) more general than `countable set of reals': the latter gives rise to equivalences involving $\cocode_{0}$, pioneered in \cite{dagsamXI}, while the former readily yields $\BOOT$ and countable choice $\QFAC^{0,1}$ (Theorem \ref{kakati}).  
%We provide another example as follows. 

\smallskip

\subsection{Countable metric spaces}\label{countri}
We study the RM-properties of metric spaces that are countable in the sense of Definition \ref{trifff}.    In particular, we study the following theorem by Sierpi\'nski (see \cite{dalzielE} and \cite{kura}*{p.\ 287}).  % based on Definition \ref{trifff}.  
\begin{center}
\emph{A countable metric space without isolated points is homeomorphic to the rationals.}
\end{center}
We obtain an equivalence for $\BOOT+\QFAC^{0,1}$ in Theorem \ref{kabukischlabiku}.
We stress that Friedman studies similar comparability theorems for countable metric spaces in \cite{friedmet}*{\S2} via codes, with most results provable in $\ACA_{0}$ or $\ATR_{0}$.  

\smallskip

First of all, the following definition is standard.  
\bdefi
Two metric spaces $(X_{0}, d_{0})$ and $(X_{1}, d_{1})$ are \emph{homeomorphic} if there are $F:X_{0}\di X_{1}$ and $F^{-1}: X_{1}\di X_{0}$ that are injective, surjective, continuous, and satisfy
 $F^{-1}(F(x_{0}))=_{X_{0}}x_{0}$ for $x_{0}\in X$ and $F(F^{-1}(x_{1}))=_{X_{1}}x_{1}$ for $x_{1}\in X_{1}$.  We refer to $F^{-1}$ as `the inverse of $F$'.
\edefi
We now have the following equivalence for Sierpi\'nski's classification theorem. 
\begin{thm}[$\ACAo$]\label{kabukischlabiku}~
The following are equivalent.
\begin{itemize}
\item The combination $\BOOT+\QFAC^{0,1}$.
\item Any countable metric space $(M, d)$ with $M\subset \N^{\N}$ and without isolated points, is homeomorphic to the rationals.  
\end{itemize}
\end{thm}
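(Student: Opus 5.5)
The plan is to copy the structure of the proof of Theorem~\ref{kakati} and Corollary~\ref{kakaticor2}: the forward direction enumerates the space and runs the classical argument, while the reversal encodes the range of an arbitrary $G$ (and, separately, a choice problem) into a countable metric space without isolated points.

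\textbf{Forward direction.} Let $(M,d)$ be countable via $Y:M\di\N$ as in \eqref{INJ3}. By $\BOOT$ we obtain the set $R=\{n\in\N:(\exists x\in M)(Y(x)=n)\}$. Then $\QFAC^{0,1}$, applied to $(\forall n\in R)(\exists x\in M)(Y(x)=n)$, gives a sequence $(x_n)_{n\in R}$ that meets every $=_M$-class. This is quantifier-free modulo $(\exists^2)$ after coding membership in $M$. Hence $(M,d)$ is, up to $=_M$, a countable metric space presented by a sequence, which is in effect a code in the second-order sense. Since $\ACAo+\BOOT$ proves $\FIVE$, we may work with this sequence and run Sierpi\'nski's back-and-forth construction on the coded space. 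This yields a homeomorphism $F_0$ from $\{x_n\}$ onto $\Q$ together with its inverse $F_0^{-1}:\Q\di\{x_n\}$. We then extend $F_0$ to all of $M$ by $F(x):=F_0(x_{Y(x)})$. This is extensional because $x=_M x_{Y(x)}$, and continuity is inherited since $F$ factors through the identification. We take $F^{-1}:=F_0^{-1}$. That the back-and-forth argument formalises with $\FIVE$ on coded spaces I would take from Friedman's work \cite{friedmet} or do directly; this step is routine.

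\textbf{Reversal, $\BOOT$.} Prove $\RANGE$ instead. Fix $G:\N^\N\di\N$ and let $M$ consist of pairs, coded in $\N^\N$, of the form $(q,f)$ with $q\in\Q$. Choose a metric that makes the points $(q,f)$ with $G(f)=n$ all $=_M$-equal to one another, and equal to a fixed ``marker'' point $p_n$ exactly when $n$ is in the range; otherwise the marker is not realised. A cleaner variant of the same idea is the following. Take a copy $\Q_n$ of the rationals for each $n$, and adjoin an extra block $\Q_n'$ whose points are $(q,f)$ with $G(f)=n$. Then $\Q'_n$ is nonempty iff $n\in\mathrm{ran}(G)$. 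Place all the blocks as disjoint open pieces inside a real interval, using the metric inherited from $\R$ on rational positions, so that there are no isolated points, and let $Y(q,f)$ code the pair $(q,n)$ together with the block type. Now apply the theorem to get a homeomorphism $F:M\di\Q$ with inverse $F^{-1}$. Then $n\in\mathrm{ran}(G)$ iff $(\exists r\in\Q)(F^{-1}(r)\in\Q'_n)$, and the right-hand side is arithmetical in $F^{-1}$, hence decidable by $(\exists^2)$. This gives $\RANGE$ and therefore $\BOOT$.

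\textbf{Reversal, $\QFAC^{0,1}$.} Given $(\forall n)(\exists f)\varphi(f,n)$, use the same space with block $n$ consisting of the points $(q,f)$ such that $\varphi(f,n)$ holds. Each block is then nonempty, and $F^{-1}$ applied to a rational found by Feferman's $\mu$ inside the image of block $n$ returns a witness $f_n$, exactly as in Corollary~\ref{kakaticor2}. The main obstacle is the design of $M$: it must be countable in the sense of \eqref{INJ3}, have no isolated points, and have a metric on $\N^\N$ that is extensional. At the same time, the existence of points in a block must be detectable arithmetically from $F^{-1}$ without already knowing which blocks are nonempty. I would try first to make the distances depend only on the rational coordinate and the block index, so that the metric is plainly total and extensional.
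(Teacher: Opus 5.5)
Your proposal is correct and follows essentially the paper's route. In the forward direction, $\BOOT$ gives the range of $Y$, and $\QFAC^{0,1}$ turns it into an enumeration on which the classical proof runs. In the reversal, a countable metric space on $\N^{\N}$ encodes the values of $G$ (or the witnesses of $\varphi$) so that the range, or a choice function, can be read off from $F^{-1}$ using $(\exists^{2})$ and Feferman's $\mu$. The differences are cosmetic: the paper puts each $G$-dependent point at the single irrational position $\pi\cdot q_{G(f)}$ amid a dense set of rational points instead of giving each value its own self-dense block, and it obtains $\QFAC^{0,1}$ from the same $G$-space rather than a separate $\varphi$-space (your first `marker' sketch is superfluous once you have the block construction).
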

\begin{proof}
For the downward implication, consider a metric space $(M, d)$ with $M\subset \N^{\N}$ with $Y:M\di \N$ injective.  
Use $\BOOT$ to define the range of $Y$, i.e.\ $X=\{n\in \N: (\exists x\in M)(Y(x)=n )\}$.  
Apply $\QFAC^{0,1}$ to $(\forall n\in \N)(\exists x\in M)( n\in X\di Y(x)=n)$ to obtain an enumeration of $M$.
Given this enumeration, the elementary proof of the second item in \cite{dalzielE} goes through verbatim in $\ACAo$. 

\smallskip

For the upward implication, we first obtain $\RANGE$, i.e.\ let $G^{2}$ be fixed.  
Let $(q_{n})_{n\in \N}$ be an enumeration of $\Q$, put $M=\N^{\N}$, and define $Z:M\di \R$ as follows:
\[
Z(f):=
\begin{cases}
q_{n} &  f(0)=n \textup{ is odd }\\
\pi\cdot q_{G(f(1)*f(2)*\dots)} & f(0)=n \textup{ is even }
\end{cases}.
\]
We also put $f=_{M}g$ if and only if $Z(f)=Z(g)$ and define $d(f, g):=|Z(f)-Z(g)|$.  
Then $(M, d)$ is a metric space which is moreover countable as $Z$ readily provides an injection to $\N$ (relative to $=_{M}$).  
Essentially by the definition of the metric (and the density of $\Q$ in $\R$), there are no isolated points in $(\N^{\N}, d)$.  
Then $(\N^{\N}, d)$ and $(\Q, |\cdot |_{\Q})$ are homeomorphic and suppose $F, F^{-1}$ are the associated homeomorphism and its inverse.  
By definition, we have 
\be\label{zongbird}
(\exists f\in \N^{\N})(G(f)=n)\asa (\exists q\in \Q)( G( F^{-1}(q)(1)*F^{-1}(q)(2)*\dots   )=n  ),
\ee
which yields $\BOOT$ as the right-hand side of \eqref{zongbird} is decidable given $(\exists^{2})$.  
Regarding $\QFAC^{0,1}$, observe that if $(\forall n\in \N)(\exists f\in \N^{\N})(G(f)=n)$, then a choice function is obtained from the sequence $(F^{-1}(q_{n})(1)*F^{-1}(q_{n})(2)*\dots)_{n\in \N}$.  
This readily yields $\QFAC^{0, 1}$, and we are done.  
\end{proof}
In line with Definition \ref{trifff}, we have the following stronger notion.  
\bdefi
A set $X$ with equivalence relation $=_{X}$ is \emph{strongly countable} if there is $Y:X\di \N$ that satifies \eqref{edelkraft} and $(\forall n\in \N)(\exists x\in X)( Y(x)=n )$.  
\edefi
\begin{cor}[$\ACAo$]\label{kabukischlabikucor}~
The following are equivalent.
\begin{itemize}
\item The axiom $\QFAC^{0,1}$.
\item Any strongly countable metric space $(M, d)$ with $M\subset \N^{\N}$ and without isolated points, is homeomorphic to the rationals.  
\end{itemize}
\end{cor}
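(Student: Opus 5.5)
The plan is to adapt the proof of Theorem \ref{kabukischlabiku}, dropping every use of $\BOOT$; strong countability supplies what $\BOOT$ was needed for there.

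\emph{Downward implication.} Let $(M,d)$ be strongly countable with $M\subset \N^{\N}$ and no isolated points, witnessed by $Y:M\di \N$. Surjectivity of $Y$ gives $(\forall n\in \N)(\exists x\in M)(Y(x)=n)$. The matrix $x\in M\wedge Y(x)=n$ is decidable in $\ACAo$, using that $M$ is given by a characteristic function. Hence $\QFAC^{0,1}$ yields a sequence $(x_{n})_{n\in \N}$ with $Y(x_{n})=n$. Since $Y$ is injective relative to $=_{M}$, every $x\in M$ satisfies $x=_{M}x_{Y(x)}$, so this sequence is an enumeration of $M$ modulo $=_{M}$. From here the elementary back-and-forth proof from \cite{dalzielE} goes through in $\ACAo$, exactly as in the proof of Theorem \ref{kabukischlabiku}. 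It produces $F$ and $F^{-1}$, where $F(x)$ is defined as the image of $x_{Y(x)}$, so $F$ respects $=_{M}$.

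\emph{Upward implication.} Fix $\varphi$ quantifier-free with $(\forall n)(\exists f)\varphi(f,n)$. Up to coding, it suffices to treat $G^{2}$ with $(\forall n)(\exists f)(G(f)=n)$, as in the proof of Corollary \ref{kakaticor2}. Define $Z$, $M=\N^{\N}$, $=_{M}$ and $d$ as in the proof of Theorem \ref{kabukischlabiku}.

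The key new point is strong countability. The injection to $\N$ sends $f$ to a code of the rational $Z(f)/1$ or $Z(f)/\pi$ together with the parity tag; this is computable via $\exists^{2}$, since $\pi$ is irrational. Its surjectivity holds because all $q_{n}$ are attained for odd $f(0)$. Every $\pi q_{m}$ is attained for even $f(0)$ by the assumed totality of the range of $G$. Tag values corresponding to no rational (e.g.\ non-reduced codes) can be avoided by choosing the coding to be a bijection between $\N$ and $\Q\times\{0,1\}$.

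There are no isolated points: the image of $Z$ is $\Q\cup\pi\Q$, which is dense, and both parts are dense.

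The second item then yields a homeomorphism $F$ with inverse $F^{-1}$. A choice function is $n\mapsto F^{-1}(q')(1)*F^{-1}(q')(2)*\cdots$, where $q'=F(h_{n})$ for any fixed $h_{n}$ with $h_{n}(0)$ even and $Z(h_{n})=\pi q_{n}$. Such $h_{n}$ cannot be written down directly, so one instead searches via $\mu$ over $q\in\Q$ for the least $q$ with $G(F^{-1}(q)(1)*\cdots)=n$ and $F^{-1}(q)(0)$ even. Such a $q$ exists by surjectivity of $F$ onto $\Q$. This gives the required sequence.

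The main obstacle is verifying strong countability: the injection must be surjective onto all of $\N$, not just injective. This is why the totality hypothesis on $G$ must be built into the space, and why the coding of $\Q\times\{0,1\}$ has to be arranged bijectively.
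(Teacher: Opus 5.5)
Your proposal is correct and follows the paper's own route: strong countability makes $\BOOT$ superfluous in the downward direction (you apply $\QFAC^{0,1}$ directly to the surjectivity of $Y$), and in the upward direction you reuse the space of Theorem~\ref{kabukischlabiku}, which is strongly countable precisely because $G$ is assumed to have full range, and read off a choice function via $F^{-1}$ as in \eqref{zongbird}, your $\mu$-search over $\Q$ being a more careful version of the paper's sequence $(F^{-1}(q_{n})(1)*F^{-1}(q_{n})(2)*\dots)_{n\in\N}$. Three small repairs are needed: the paper's $Z$ only attains $q_n$ for odd $n$, so reindex (e.g.\ use $q_{(f(0)-1)/2}$) to make your bijective coding surjective; the $q$ in your search exists because $F^{-1}(F(h_n))=_{M}h_n$ and $\pi q_n$ is irrational, which forces an even tag (not because $F$ is surjective); and that argument breaks at the single index with $q_n=0$, which you should handle separately or avoid by using $\pi+q_m$ in place of $\pi\cdot q_m$.
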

\begin{proof}
Consider the proof of the theorem.  
For the downward implication, note that $\BOOT$ is no longer needed.  
The upward implication follows as in the proof of Corollary \ref{kakaticor2}, based on \eqref{zongbird}.  
\end{proof}
Next, we consider the following related statement, studied in \cite{earlyhirst3}.
\begin{center}
\emph{Let $X, Y$ be countable, closed, and totally bounded subsets of a complete metric space $(M, d)$.  Then either $X$ is homeomorphic to a subset of $Y$ or vice versa.}
\end{center}
The reader is invited to show that this statement is equivalent to $\BOOT+\QFAC^{0,1}$.  
Formulated using codes, this statement is equivalent to $\ATR_{0}$ by \cite{earlyhirst3}*{Theorem 4.1}.  

\smallskip

In conclusion, we have obtained $\BOOT$ and $\QFAC^{0,1}$ from basic statements about countable metric spaces.  
Weaker statements about strongly countable sets turn out to be equivalent to $\QFAC^{0,1}$.

\subsection{Countable second-countable spaces}\label{SPAST}
\subsubsection{Introduction}
We study RM-properties of second-countable spaces that are countable in the sense of Definition \ref{trifff}.  
In particular, we show that $\BOOT$ is equivalent to the associated supremum principle for continuous functions (Section~\ref{zenfff}).  Scattered spaces and metrisability are also briefly discussed.  
We also show that the Ginsburg-Sands theorem for countable spaces as in Definition \ref{trifff} is equivalent to countable choice as in $\QFAC^{0,1}$ (Section \ref{kamirpamir}).
The former theorem has been studied recently in second-order RM (\cite{benham}).  

\smallskip

First of all, for the rest of this section, we tacitly assume that $X$ is an RSC space with $X\subset \R$, basis $(U_{i})_{i\in \N}$, and mapping $k$ as in Definition \ref{CSC}.  
The associated notion of topological (in)distinguishability is defined as follows.  
\bdefi
For $x, y, \in X$, we write `$x\equiv_{X} y$' in case $(\forall i \in \N)(x\in U_{i}\asa y\in U_{i})$.  We say that `$x$ and $y$ are topologically indistinguishable'.
\edefi
Secondly, we have previously defined an RSC space $X$ to be countable if the set $X$ has this property, namely in \cite{samSECOND}.  
While this choice seems natural and yields nice equivalences involving $\cocode_{0}$, the following definition turns out to be more general.  
We motivate the use of `$\equiv_{X}$' in Remark \ref{kokojambbo} beyond the observation that it naturally comes to the fore for the reals and metric spaces. 
\bdefi\label{hyyinx}
A CRSC-space is an RSC space $X$ for which there exists $Y:\R\di \N$ such that 
\be\label{INJX}
(\forall x, y\in X)( Y(x)=_{\N} Y(y)\di x\equiv _{X} y).  
\ee
\edefi
Finally, we motivate the use of topological indistinguishability in \eqref{INJX} based on properties of separation axioms of topological spaces.  
\begin{rem}[Separation axioms]\rm\label{kokojambbo}
For topological spaces, there are a number of equivalent formulations of the $T_{1}$-axiom.   One of these is the condition \emph{finite sets are closed} by \cite{theark}*{p.\ 28, Prop.\ 13} or \cite{zot}*{\S 16.7}.  
Clearly, the space from the proof of Theorem \ref{fisterz} satisfies the latter condition.   Of course, $T_{1}$-spaces are $T_{0}$ in general, i.e.\ distinct points are topologically distinguishable.  
Hence, the use of topological indistinguishability in \eqref{INJX} is natural enough.  
%Thus, our definition of CRSC-space is as general as possible, though based on an equivalent definition.  
Moreover, the space from the proof of Theorem \ref{zupb2} is even perfectly normal.  % i.e.\ $T_{6}$.    
\end{rem}

\subsubsection{Supremum principle}\label{zenfff}
In this section, we shall establish an equivalence between $\BOOT$ and the supremum principle for compact CRSC-spaces, necessitating the following definitions. 
\bdefi
An RSC-space $X$ is \(open cover\) compact if for any covering\footnote{A mapping $\Psi:X\di \N$ generates a covering of $X$ in case $x\in U_{\Psi(x)}$ for any $x\in X$.  
Both Cousin and Lindel\"of (only) study this kind of uncountable coverings in \cites{cousin1, blindeloef}, which are the papers in which the Cousin and Lindel\"of lemmas first appeared.\label{eber}} generated by $\Psi:X\di \N$, there are $x_{0}, \dots, x_{k}\in X$ such that $\cup_{i\leq k}U_{\Psi(x_{i})}$ covers $X$, 
\edefi
\bdefi
For an RSC-space $X$, a function $f:X\di \R$ is continuous if $f^{-1}(V)$ is open in $X$ for every open $V\subset \R$.  
\edefi
%We recall the following principle from Section \ref{dacount}.
\begin{princ}[$\SUP$]\label{zupb2}
Let $X$ be a compact RSC-space.
For a continuous function $f:X\di \R$ and a decreasing sequence $(C_{n})_{n\in \N}$ of closed sets, there is a sequence $(x_{n})_{n\in \N}$ such that $x_{n}=\sup_{x\in C_{n}}f(x)$.
\end{princ}
The principle $\SUP$ restricted to countable $X\subset \R$ is provable in $\ACAo+\cocode_{0}$ using the obvious proof.  
By contrast, we have the following equivalence. 
\begin{thm}[$\ACAo+\QFAC^{0,1}$]\label{fisterz} The following are equivalent.
\begin{itemize}
\item $\BOOT$
\item The supremum principle $\SUP$ for CRSC-spaces.%\label{1itemd}
%\item The previous item restricted to effectively or strongly continuous functions.\label{1iteme}
%\item Item \eqref{1iteme} restricted to sequences $(C_{n})_{n\in \N}$ of \emph{uniformly} closed sets.\label{1itemf}
%%\item The Ginsburg-Sands theorem $\GS$ for countable RSC-spaces.\label{1gsitem}
%\item The enumeration principle $\cocode_{0}$.
\end{itemize}
%The equivalence holds for `sequential', `countable', and `limit point' compactness, the latter additionally assuming $\QFAC^{0,1}$.
\end{thm}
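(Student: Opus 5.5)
For the forward direction, fix a compact CRSC-space $X$ with $Y:\R\di\N$ satisfying \eqref{INJX}, a continuous $f:X\di\R$, and a decreasing sequence $(C_{n})_{n\in\N}$ of closed sets. We use $\BOOT$ to define the set $R=\{(n,m)\in\N^{2}: (\exists x\in X)(x\in C_{n}\wedge Y(x)=m)\}$. Applying $\QFAC^{0,1}$ to $(\forall (n,m)\in\N^{2})(\exists x\in\R)((n,m)\in R\di [x\in C_{n}\wedge Y(x)=m])$ then yields a double sequence $(x_{n,m})$ that picks one representative from each $\equiv_{X}$-class meeting $C_{n}$.

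The key observation is that topologically indistinguishable points have the same $f$-value. Suppose $x\equiv_{X}y$ and $f(x)\ne f(y)$. Take a rational open interval $V$ containing $f(x)$ but not $f(y)$. Then $f^{-1}(V)$ is open and contains $x$, so some basic $U_{i}$ contains $x$ and is included in $f^{-1}(V)$. But then $y\in U_{i}$, which is a contradiction. It follows that $\sup_{x\in C_{n}}f(x)=\sup_{m:(n,m)\in R}f(x_{n,m})$, and the right-hand side is a supremum of a sequence of reals, hence computable uniformly in $n$ using $(\exists^{2})$. Compactness is used only to guarantee that the supremum is finite (a routine covering argument). Note also that closedness of $C_{n}$ is not even needed here.

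For the reversal, we derive $\RANGE$, which was shown equivalent to $\BOOT$ in the proof of Theorem \ref{kakati}. Fix $G:\N^{\N}\di\N$; up to coding, we may take $G$ to be defined on reals in $[0,1]$. Let $X$ consist of the reals $x=m+1+t$ with $t\in[0,1]$ and $G(t)=m$. We partition $X$ by the value $m$, i.e.\ we use the basis with $U_{m}=X\cap(m+1,m+2)$, together with an extra basic set $U_{-1}=X$ suitably indexed, and with $k$ the obvious intersection map. Then $Y(x)=\lfloor x\rfloor$ (computed via $\exists^{2}$) witnesses \eqref{INJX}, since points in the same piece are indistinguishable.

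The main obstacle is compactness: infinitely many pieces would make the space non-compact. We fix this by arranging the topology so that every open set containing a designated point $p$ (say $p=0$, added to $X$) contains all but finitely many pieces. Concretely, the basis consists of the single pieces $U_{m}$ and the co-finite unions $V_{k}=\{p\}\cup\bigcup_{m\ge k}U_{m}$. This gives a one-point-compactification-type space that is compact via a $\SIND$-free finite covering argument. Take $f(x)=2^{-m}$ on piece $m$ and $f(p)=0$; this is continuous because preimages of open sets are unions of pieces, possibly together with a tail $V_{k}$ when $0$ lies in the open set. Let $C_{n}=X\setminus\bigcup_{m<n}U_{m}$, which is closed and decreasing. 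Then $\sup_{C_{n}}f=2^{-m_{n}}$, where $m_{n}$ is the least $m\ge n$ in the range of $G$, or $0$ if there is none. From the sequence of suprema we decide, for each $n$, whether $n\in\textup{range}(G)$ by checking whether $\sup_{C_{n}}f=2^{-n}$. This yields $\RANGE$ and hence $\BOOT$.
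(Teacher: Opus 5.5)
Your proof follows the paper's route in both directions. In the forward direction, $\BOOT$ plus $\QFAC^{0,1}$ supply representatives of the $\equiv_X$-classes. You do this separately for each $C_n$, which avoids any appeal to closedness, and you make explicit that a continuous $f$ is constant on $\equiv_X$-classes; this is what the paper's ``straightforward'' leaves implicit. In the reversal, you derive $\RANGE$ from the same space as the paper, up to placing the fibres in disjoint intervals: the fibres of $G$ as basic sets plus tails containing a special point. You use $f=2^{-m}$ on the $m$-th fibre, $C_n$ deleting the first $n$ fibres, and the test $\sup_{C_n}f=2^{-n}$. Two things need repair.

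\textbf{Endpoint slip.} You let $t$ range over $[0,1]$ but take $U_m=X\cap(m+1,m+2)$. So, for example, the point $G(0)+1\in X$ has $X$ as its only basic neighbourhood. Then $f$ is not continuous at this point. Moreover, the point is never removed from any $C_n$, so $\sup_{C_n}f\geq 2^{-G(0)}$ for all $n$ and the test breaks. Use $t\in(0,1)$ as in the proof of Theorem~\ref{kakati}, or work on $[0,1]$ directly as the paper does.

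\textbf{Compactness is not ``$\SIND$-free''.} This is the more serious problem. Take the cover $\Psi$ that sends $p$ to $V_k$ and each point of the $m$-th fibre to $U_m$. Any finite subcover must contain a point from every non-empty fibre with $m<k$. Hence compactness of your spaces, for all $G$, yields bounded comprehension for $\{m<k:(\exists t)(G(t)=m)\}$, and thus $\SIND$. But $\SIND$ is not provable in $\ACAo+\QFAC^{0,1}$:
\begin{itemize}
\item $\SIND$ yields $(\forall n)(\exists X\subseteq\N)(X=\emptyset^{(n)})$, a $\Pi_2^1$-sentence not provable in $\ACA_0$;
\item $\ACAo+\QFAC^{0,1}$ is conservative over $\SAC$, which is $\Pi_2^1$-conservative over $\ACA_0$.
\end{itemize}
The paper's proof has the same lacuna. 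It only asserts that $X$ is ``clearly'' covered by $\cup_{i\leq n_0}U_{2i}\cup U_{n_0}$, without producing points $x_i$ whose sets $U_{\Psi(x_i)}$ form the subcover. So you should make this step explicit, for example by adding $\SIND$ to the base theory, as the paper does for the theorem on cwos. $\SIND$ together with $\QFAC^{0,1}$ does give the needed bounded comprehension, and then $\QFAC^{0,1}$ picks the points of the finite subcover.
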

\begin{proof}
The downward direction follows as in the proof of Theorem \ref{kakati}.
Indeed, $\BOOT+\QFAC^{0,1}$ yields an enumeration of any CRSC-space, after which the proof is straightforward. 

\smallskip

To obtain $\BOOT$ from the second item, fix $G:[0,1]\di \N$ and note that we may assume that $G(x)\ne G(0)$ for $x\in (0,1]$.
Now define the sequence $(U_{i})_{i\in \N}$ as follows:
\be\label{1terfz}
U_{2n}=\{x\in (0,1]: G(x)=n \} \textup{ and }  U_{2n+1}=\{  0 \}\cup \{ x\in (0,1]:G(x)> n \}.
\ee
Use $(\exists^{2})$ to define $k:([0,1]\times \N^{2})\di \N$ as follows for $x\in [0,1]$ and $i\leq j$ in $\N$
\be\label{kkkkz}
k(x, i, j):=
\begin{cases}
%i  & \textup{ if $i$ and $j$ are even,}\\
j  & \textup{ if $i$ and $j$ are odd,}\\
j  & \textup{ if $i$ is odd and $j$ is even,}\\
i  & \textup{ otherwise,}\\
\end{cases}
\ee
and observe that this mapping has the required properties for forming a base of $X=[0,1]$. 
Hence, $X$ is an RSC-space, which can be seen to be compact as follows:  let $\Psi:X\di \N$ be given and note that $n_{0}=\Psi(0)$ must be odd to guarantee $0\in U_{n_{0}}$ following \eqref{1terfz}. 
Clearly, $X$ is covered by $\cup_{i\leq  n_{0}}U_{2i}\cup U_{n_{0}}$.  %In exactly the same way, one proves that $X$ is countably-compact.  
Moreover, $X$ is also a CRSC-space, as $x\equiv_{X} y\asa G(x)=G(y)$ for $x, y\in X\setminus \{0\}$.  Define $f:X\di \R$ by $f(x):=\frac{1}{2^{G(x)}}$ if $x\ne 0$ and $f(0):=0$.  
One readily proves that $f$ is continuous.  Now apply $\SUP$ to the sequence $C_{n}:= X\setminus \cup_{i< n}U_{2i}$ and observe that for $n\geq 1$ we have:
\be\label{tochuniz}\textstyle
(\exists x\in [0,1])(G(x)=n)\asa [\frac{1}{2^{n}}= \sup_{x\in C_{n}}f(x)].
\ee
Since the right-hand side of \eqref{tochuniz} is decidable using $(\exists^{2})$, we can define the range of $G$.  
Up to coding, this yields $\RANGE$ and we are done.  
\end{proof}
The reader can verify that the maximum principle, where the supremum in $\SUP$ is attained, is equivalent to $\BOOT+\QFAC^{0,1}$. 

\smallskip

%\begin{rem}[$\FIVE^{\omega}$]\rm
%Show that $\SUP$ restricted to countable sets $X\subset \R$ implies $\SIX$.  Compare to the theorem.   
%\end{rem}
Next, a space is \emph{scattered} if every non-empty subset has an isolated point.  The centred statement is proved by Hausdorff in \cite{huisdorp}*{Ch.\ VIII, \S3}.    
\begin{cor}[$\ACAo+\QFAC^{0,1}$] The following statement implies $\BOOT$.
\begin{center}
\emph{a scattered RSC-space $X$ can be enumerated.}
\end{center}
Moreover, the equivalence in Theorem~\ref{zupb2} goes through for `CRSC-space' replaced by `scattered CRSC-space'.  
\end{cor}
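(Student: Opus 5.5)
The plan is to reuse the space constructed in the proof of Theorem~\ref{fisterz} and check that it is scattered. Fix $G:[0,1]\di \N$ with $G(x)\ne G(0)$ for $x\in(0,1]$, and let $X=[0,1]$ carry the basis $(U_{i})_{i\in\N}$ from \eqref{1terfz} together with the map $k$ from \eqref{kkkkz}.

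\textbf{Step 1: $X$ is scattered.} Let $A\subseteq X$ be non-empty.
\begin{itemize}
\item If $A$ contains some $x\ne 0$, use $(\exists^{2})$ to pick the least $n$ with $(\exists x\in A\setminus\{0\})(G(x)=n)$. Bounded comprehension or $\SIND$ may be needed here, since this quantifies over reals.
\item For any such $x$, the open set $U_{2n}$ meets $A$ only in points topologically indistinguishable from $x$.
\end{itemize}
So $x$ is isolated in $A$ modulo $\equiv_{X}$, which is the relevant notion of equality in a CRSC-space. If $A=\{0\}$, then $0$ is trivially isolated. This shows that the space is a scattered CRSC-space.

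\textbf{Step 2: the second claim.} The same space, with the same continuous $f$ and closed sets $C_{n}$, appears in the reversal of Theorem~\ref{fisterz}. Therefore $\SUP$ for scattered CRSC-spaces already yields $\RANGE$, and hence $\BOOT$. The forward direction only becomes easier under the restriction, since it uses nothing beyond the enumeration obtained from $\BOOT+\QFAC^{0,1}$.

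\textbf{Step 3: the first claim.} Suppose the scattered RSC-space $X$ above can be enumerated by a sequence $(x_{m})_{m\in\N}$. Then, for every $n$,
\[
(\exists x\in[0,1])(G(x)=n)\asa(\exists m\in\N)(G(x_{m})=n),
\]
and the right-hand side is decidable via $(\exists^{2})$. This gives the range of $G$, hence $\RANGE$ and thus $\BOOT$.

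The main obstacle is that a literal enumeration of $[0,1]$ is impossible. So `enumerated' must be read modulo $\equiv_{X}$, i.e.\ as a sequence meeting every indistinguishability class. With that reading the argument above goes through. Alternatively, one can shrink the set to $X=\{0\}\cup\{x: G(x)\text{ defined}\}$ while keeping the same classes; this changes nothing in the argument.
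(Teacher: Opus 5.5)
Your proposal is correct and follows the paper's proof. You reuse the space from \eqref{1terfz}, note that it is scattered modulo $\equiv_{X}$ because any $x\ne 0$ is isolated by $U_{2G(x)}$, and then let both the $\SUP$ reversal of Theorem~\ref{fisterz} and an enumeration of this space yield $\RANGE$, hence $\BOOT$. Two small blemishes: choosing the least $n$ in Step~1 is unnecessary (and not justified by $(\exists^{2})$ alone), since any $x\in A\setminus\{0\}$ works; and your closing remark about shrinking $X$ is vacuous, because $G$ is total on $[0,1]$.
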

\begin{proof}
The RSC space $X$ defined using \eqref{1terfz} is clearly scattered due to the presence of $U_{2n}$.  
This establishes both the equivalence and the fact that $\BOOT$ follows from the centred statement. 
%Under what conditions is a reversal possible?  Show that the equivalence in Theorem~\ref{zupb2} goes through for `CRSC-space' replaced by `scattered RSC-space'.  
\end{proof}
Hausdorff's proof from \cite{huisdorp} of the centred statement goes through in $\ACAo+\BOOT+\QFAC^{0,1}$, using the fact that the union of enumerable sets is enumerable.  

\smallskip

Regarding further results, the \emph{Urysohn metrisation theorem} states that a regular Hausdorff and second-countable space is metrisable, i.e.\ homeomorphic to some metric space.  
This theorem is studied in second-order RM (\cite{damurm}*{\S10.8.3}) for homeomorphisms to complete and separable metric spaces.  
One readily shows that $\BOOT$ follows from the statement that a regular Hausdorff CRSC space is homeomorphic to a separable metric space. 
%\item Assuming $(\SS^{2})$, show that $\SIX$ follows from the statement that a regular Hausdorff RSC space $X$ for countable $X\subset \R$ is homeomorphic to a separable metric space.  
%\end{itemize}
%What is the difference between the proofs of these two items?
%\end{rem}

\smallskip

%\begin{rem}\rm
%Which of the above theorems concerning second-countable spaces go through for the restrictions discussed in Remark \ref{kokojambbo}?
%\end{rem}
In conclusion, countability as in \eqref{INJX} gives rise to theorems that reverse to $\BOOT$, which is more general than `real countability' as in \eqref{INJ2}.  % the latter notion `only' gives rise to theorems equivalent to $\enum$.   

\smallskip

\subsubsection{The Ginsburg-Sands theorem}\label{kamirpamir}
We show that countable choice as in $\QFAC^{0,1}$ is equivalent to the Ginsburg-Sands theorem for second-countable spaces that are countable in the sense of Definition \ref{hyyinx}.  
The second-order versions of the Ginsburg-Sands theorem are all provable in $\ACA_{0}$ (\cite{benham}) while the higher-order versions based on countable sets of reals are provable in $\ACAo+\cocode_{0}$ (\cite{samBOOK, samSECOND}).  

\smallskip

First of all, the Ginsburg-Sands theorem is formulated as follows in \cite{zieginds}*{p.\ 574}.   
%First of all, we study the Ginsburg-Sands theorem  for RSC-spaces as in Principle \ref{GS}.  
%The former has been studied in RM (\cite{benham}) for CSC-spaces, i.e.\ RCS-spaces that come with an enumeration.  
\begin{princ}[$\GS$]\label{GS}
An infinite topological space has a sub-space homeomorphic to exactly one of the following topologies over $\N$:
\begin{itemize}
\item The discrete topology: all sets are open.
\item The indiscrete topology: only $\emptyset$ and $\N$ are open.  
\item The co-finite topology: the open sets are $\emptyset$, $\N$, and any subset of $\N$ with finite complement. 
\item The initial segment topology: the open sets are $\emptyset$, $\N$, and any set of the form $[0, n]=\{k\in \N : k\leq n \}$.
\item The final segment topology: the open sets are $\emptyset$, $\N$, and any set of the form $[n, +\infty)=\{k\in \N : n\leq k \}$.
\end{itemize}
\end{princ}
%The results on $\GS$ and related topics may be found in Section \ref{boco} and establish the observation (O1).  
%We note that the second-order version of the Ginsburg-Sands theorem for CSC-spaces is provable in $\ACA_{0}$ by \cite{benham}*{\S4}.
%The Ginsburg-Sands theorem will be seen to have interesting properties in both second- and higher-order RM.  
%
%\smallskip
Secondly, we obtain an equivalence involving $\GS$ and countable choice.    
\begin{thm}[$\ACAo+\IND_{3}$]\label{true} 
The following are equivalent. 
\begin{itemize}
\item The Ginsburg-Sands theorem\footnote{A set $X$ is called `infinite' if it contains arbitrary many pairwise distinct elements.  
A more detailed discussion of finiteness can be found in Section \ref{pifpoefpaf}.  Instead of infiniteness, we could require the existence of arbitrary many pairwise topologically distinguishable elements; the equivalence in Theorem \ref{true} would still go through.  \label{foetsie}} $\GS$ for CRSC-spaces.
\item Countable choice as in $\QFAC^{0,1}$.
\end{itemize}
\end{thm}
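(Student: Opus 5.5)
For the downward implication, I would assume $\QFAC^{0,1}$ and fix an infinite CRSC-space $X$ with basis $(U_{i})_{i\in\N}$ and $Y:\R\di\N$ satisfying \eqref{INJX}. The aim is to reduce to the second-order $\GS$ for CSC-spaces, which is provable in $\ACA_{0}$ by \cite{benham}. The difficulty is that without $\BOOT$ we cannot form the range of $Y$ on $X$. Infinity of $X$ only gives, for each $k$, pairwise distinct $x_{0},\dots,x_{k}\in X$. By \eqref{INJX}, distinct values of $Y$ yield topologically distinguishable points, and conversely indistinguishable points have the same $Y$-value.

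First I would use $\QFAC^{0,1}$ on the statement ``for every $k$ there is a finite sequence $w^{1^{*}}$ of length $k+1$ of elements of $X$ with pairwise distinct $Y$-values''. This is arithmetical in the parameters $Y$, $F_{X}$ and is made quantifier-free via $\mu^{2}$, noting that $\exists^{2}$ decides the inner bounded conditions. The hypothesis holds in the version of infinity from footnote~\ref{foetsie}, where points are pairwise topologically distinguishable, since distinguishability forces distinct $Y$-values. For plain pairwise distinctness, I would pass to $Y$-distinct representatives, which needs $\IND_{3}$: by bounded comprehension one can decide, for $n\leq k$, whether some $x\in X$ has $Y(x)=n$, and finiteness arguments then give $k+1$ distinct values. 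This is where $\IND_{3}$ enters.

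Concatenating the sequences produced by $\QFAC^{0,1}$ yields a sequence $(y_{n})_{n\in\N}$ in $X$ with infinitely many $Y$-values. Using $\mu^{2}$, I would thin it to a subsequence with pairwise distinct $Y$-values, hence pairwise distinguishable points. The subspace $D=\{y_{n}\}$ with the induced basis is then coded as a CSC-space, using $\exists^{2}$ to decide ``$y_{n}\in U_{i}$'' and to compute $k$. It is infinite, so the second-order $\GS$ (in $\ACA_{0}\subseteq\ACAo$) provides a subspace of $D$, and thus of $X$, homeomorphic to one of the five topologies. The ``exactly one'' clause is a combinatorial fact about the five topologies and needs no extra axioms.

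For the upward implication, I would fix quantifier-free $\varphi$ with $(\forall n)(\exists f)\varphi(f,n)$, and modulo coding take $f\in[0,1]$. Using $G$-style coding as in the proof of Theorem~\ref{fisterz}, I would define $X=\{x\in\R: (\exists n)(n<x<n+1\wedge \varphi(x-n,n))\}$ together with a topology in which all points of the block $(n,n+1)$ are indistinguishable. The choice is between two bases. The discrete-type basis $U_{n}=X\cap(n,n+1)$ makes $X$ CRSC via $Y(x)=\lfloor x\rfloor$, and it is infinite by the hypothesis plus $\IND_{3}$, since we can take $k$ pairwise distinguishable points. If I use instead the final-segment basis $U_{n}=X\cap(n,\infty)$, every subspace homeomorphic to one of the five topologies has infinitely many distinguishable points. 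Then $\GS$ yields a sequence $(z_{m})$, the homeomorphism from $\N$, meeting infinitely many blocks. From $(z_{m})$, $\mu^{2}$ gives for each $n$ some $z_{m}$ in a block $n'\geq n$. To get every $n$, I would let block $n$ encode witnesses for all $j\leq n$, i.e.\ $x-n$ codes a tuple $(f_{0},\dots,f_{n})$ with $\varphi(f_{j},j)$ for all $j\leq n$. That every block is nonempty is then an instance of $\IND_{3}$ applied to $(\exists w)(\forall j\leq n)\varphi(w(j),j)$. A choice function follows by decoding.

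The main obstacle is making sure the homeomorphism from $\GS$ actually delivers a sequence, i.e.\ a function from $\N$ into $X$, in every one of the five cases, and that the chosen basis satisfies the $k$-condition of Definition~\ref{CSC}. For final segments I would take $k(x,i,j)=\max(i,j)$.
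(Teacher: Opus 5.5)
Your upward direction, once block $n$ codes tuples $(f_{0},\dots,f_{n})$ of witnesses, is essentially the paper's. There, $X$ is the set of finite sequences $w$ with $(\forall i<|w|)\varphi(i,w(i))$, grouped into classes by length. $\IND_{3}$ gives arbitrarily many pairwise distinguishable points, and the inverse of the homeomorphism from $\GS$ yields partial choice sequences of unbounded length. Whether you use the discrete-type or the final-segment basis is immaterial.

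\textbf{The genuine gap is in your downward direction.} You apply $\QFAC^{0,1}$ to ``arbitrarily long tuples in $X$ with pairwise distinct $Y$-values''. You then claim that, for plain infinitude (pairwise $\neq_{\R}$), bounded comprehension and ``finiteness arguments'' supply such tuples. They do not. Take $X=[0,1]$ with $U_{i}=[0,1]$ for all $i$, $k(x,i,j)=i$, and $Y$ constantly $0$. This is an infinite CRSC-space with a single $Y$-value, so your premise already fails for $k=1$, and no induction repairs that. In this space the conclusion of $\GS$ is exactly the indiscrete alternative (e.g.\ the subspace $\{2^{-n}:n\in\N\}$). A procedure that only ever selects pairwise distinguishable points can never reach it.

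The paper avoids this detour. It applies $\QFAC^{0,1}$ directly to the infinitude of $X$ to get a sequence of pairwise distinct points (thinned with $(\exists^{2})$ if necessary). It then identifies points with indices and invokes the second-order $\GS$ for CSC-spaces provable in $\ACA_{0}$. That result assumes no separation axiom and covers all five cases, the indiscrete one included. No $\IND_{3}$ is needed in this direction. As written, your argument proves only the footnote variant, where $X$ has arbitrarily many pairwise topologically distinguishable points.

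\smallskip

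\noindent Relatedly, in the upward direction you claim that every subspace homeomorphic to one of the five topologies has infinitely many distinguishable points. This is false for the indiscrete topology if subspaces and homeomorphisms are taken relative to $=_{\R}$. An injective sequence inside a single block is such a subspace and yields no choice function; such a sequence exists as soon as some $\varphi(\cdot,n)$ has infinitely many witnesses. The paper's remark that ``all sets are open in $X$'' glosses over the same point and is correct only modulo $\equiv_{X}$. But modulo $\equiv_{X}$, the plain-infinite $\GS$ is refuted by the indiscrete $[0,1]$ above.

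So fix one reading and use it in both directions. With homeomorphisms modulo $\equiv_{X}$ and the footnote notion of infinitude, your two directions fit together. With $=_{\R}$, your downward direction must be replaced by the paper's, and the indiscrete case in the upward direction needs a separate argument, which neither you nor the paper supplies.
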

\begin{proof}
First of all, we show that $\GS$ for CRSC-spaces implies $\QFAC^{0,1}$.  To this end, let $\varphi$ be quantifier-free and such that $(\forall n\in \N)(\exists x\in \R)\varphi(n, x)$. 
We assume a fixed coding of finite sequences of reals as real numbers, which is readily defined in $\ACAo$. 
Now define $w^{1^{*}}\in X$ if and only if $(\forall i<|w|)\varphi(n, w(i))$, i.e.\ $X$ contains initial segments of the choice function we are after.  
Define $U_{n}:= \{ w\in X : |w|=n  \}$ and note that we obtain a base in the obvious way.  
Moreover, we have that 
\[
|w|=|v|\asa (\forall n\in \N)(v\in U_{n}\asa w\in U_{n})\asa w\equiv_{X}v,
\]
which shows that $X$ is an CRSC-space.  
Thanks to $\IND_{3}$, $X$ contains arbitrary many pairwise topologically distinct elements. 
Clearly, all sets are open in $X$ and the same for any sub-space of $X$.  
By $\GS$ as in the first item, $X$ has a sub-space that is homeomorphic to the discrete topology on $\N$.  
Let $f:X\di \N$ and $f^{-1}:\N\di X$ be the associated continuous bijection and its inverse.  Now consider the sequence $(f^{-1}(n))_{n\in \N}$ in $X$.  
By definition, $f^{-1}(n)$ is a finite sequence and since $f:X\di \N$ is a bijection, we have that $(\forall m\in \N)(\exists n\in \N)(\exists x\in f^{-1}(n))\varphi(m, x)$.  
Thus, $(f^{-1}(n))_{n\in \N}$ readily yields a choice function for $(\forall n\in \N)(\exists x\in \R)\varphi(n, x)$.
Since $\ACAo$ is available, $\QFAC^{0,1}$ also follows.  

\smallskip

Secondly, to prove $\GS$ for an CRSC-space $X$, apply $\QFAC^{0,1}$ to the statement that $X$ is infinite$^{\ref{foetsie}}$, which readily yields a sequence of elements of $X$.  
Identifying an element with its index, apply the second-order $\GS$, provable in $\ACA_{0}$ by \cite{benham}*{Theorem~4.6}, to obtain the required result.
\end{proof}
In conclusion, countability as in \eqref{INJX} gives rise to versions of $\GS$ that reverse to $\QFAC^{0,1}$, which is more general than `real countability' as in \eqref{INJ2}.  % the latter notion `only' gives rise to theorems equivalent to $\enum$.   

\subsection{Finite sets and lemmas by K\"onig}\label{pifpoefpaf}
We discuss the notion of finite set based on the above observations regarding countability and equivalence relations (Section \ref{othernamez3}).  
Armed with this knowledge, we show that K\"onig's infinity lemma is equivalent to $\QFAC^{0,1}$ and other principles (Section \ref{dachion}).  

\subsubsection{Finiteness by any other name}\label{othernamez3}
We recall the definition of `finite set of reals' and observe the essential nature of the associated equivalence relation.  
%Since the focus of this paper is on countable sets, we shall be brief.  

\smallskip

First of all, the following definition of finite set of reals is most suitable for the development of higher-order RM.  
This claim is substantiated by the many equivalences in \cites{dagsamXI, samBIG, samBIG2, samBIG3} based on this definition, e.g.\ in Fourier analysis. 
\bdefi\label{koenkruk}
A set $A\subset \R$ is finite if there is $N\in \N$ such that if $x_{0}, \dots, x_{N}\in A$ are pairwise different, then there is $j\leq N$ with $x_{j}\not\in A$.
\edefi
We refer to the number $N\in \N$ from Definition \ref{koenkruk} as a `size bound' of $A$ and write $|A|\leq N$.  
Note that `pairwise different' means $x_{i}=_{\R}x_{j}\di i=_{\N}j$ for $i,j\leq N$, i.e.\ the equivalence relation $=_{\R}$ is again essential.
Indeed, each real number has uncountably many different representations that are equal under $=_{\R}$.  
Hence, a set $A\subset \R$ satisfying Definition \ref{koenkruk} is therefore finite \emph{from the point of view of $\R$}, but does contain uncountably many different elements of Baire space, divided among finitely many (at most $N$) equivalence classes.  
%Hence, a finite set as in Definition \ref{koenkruk} 
%therefore contains uncountably different elements of Baire space

\smallskip

Secondly, let $(X, =_{X}) $ be a pair consisting of a set $X\subset \N^\N$ and an equivalence relation `$=_{X}$'.  
The associated general notion of `finiteness' is then of course defined relative to $=_{X}$, just like in Definition \ref{koenkruk}.  
\bdefi[Finite sets]\label{trukkefoor}
A set $A$ in $(X, =_{X})$ is \emph{finite} if there is $N\in \N$ such that if $x_{0}, \dots, x_{N}\in A$ are pairwise different, i.e.\ $x_{i}=_{X}x_{j}\di i=_{\N}j$ for $i,j\leq N$, then there is $j\leq N$ with $x_{j}\not\in A$.
\edefi
In conclusion, the definition of `finite subset of $X$' crucially hinges on the associated equivalence relation, namely $=_{X}$.

\subsubsection{On lemmas by K\"onig}\label{dachion}
We study the RM-properties of eponymous lemmas by K\"onig from \cite{koning147,koning26} motivated by the observations in Section \ref{othernamez3}.   
In particular, we show that the \emph{K\"onig's infinity lemma} is equivalent to countable choice as in $\QFAC^{0,1}$.  By contrast the second-order version of this lemma is provable in $\ACA_{0}$ while the restriction of this lemma to countable sets in Baire space can be proved in $\ACAo+\cocode_{0}$ plus a fragment of the induction axiom (\cite{samBOOK}).  

\smallskip

First of all, we discuss the history of K\"onig's lemmas as the literature contains some errors.  
%\begin{rem}[K\"onig's K\"onig's lemma]\rm
We let \emph{K\"onig's tree lemma} be the statement \emph{every infinite finitely branching tree has a path}.  When formulated in the language of second-order arithmetic, the tree lemma is equivalent to $\ACA_{0}$ by \cite{simpson2}*{III.7.2}.  
The paper \cite{koning147} is cited as the original source for K\"onig's tree lemma in \cite{simpson2}*{p.\ 125}, but \cite{koning147} does not mention the word `tree' (i.e.\ the word `Baum' in German).  In fact, K\"onig's original (graph theoretic) lemma from \cite{koning147} is as follows, translated from German.
\begin{princ}[K\"onig's infinity lemma for graphs]\label{KILq}
{If a countably infinite graph $G$ can be written as countably many non-empty finite sets $E_{1}, E_{2}, \dots$ such that each point in $E_{n+1}$ is connected to a point in $E_{n}$ via an edge, then $G$ has an infinite path $a_{1}a_{2}\dots$ such that $a_{n}\in E_{n}$ for all $n\in \N$. }
%Zerf\UTF{00E4}llt die abz\UTF{00E4}hibarunendliche Menge der Punkte (= Knoten- punkte) eines unendlichen Graphen G in abz\UTF{00E4}hlbar viele endliche
%nicht leere Mengen Eu E2, E3,... derart, dass jeder Punkt von En+l (n = 1, 2,3,...) mit einem Punkte von E\UTF{201E} durch eine Kante ver- bunden ist, so gibt es im Graphen einen \UTF{201E}unendlichen Weg" atafas..., der aus jeder der Mengen E\UTF{201E} einen Punkt a\UTF{201E} enth\UTF{00E4}lt.
\end{princ}
The original version, introduced a year earlier in \cite{koning26}, is formulated in the language of set theory as follows, in both \cite{koning26, koning 147}.  
\begin{princ}[K\"onig's infinity lemma for sets]\label{KIL2q}
Given a sequence $E_{0}, E_1, \dots$ of finite non-empty sets and a binary relation $R$ such that for any $x\in E_{n+1}$, there is at least one $y\in E_{n}$ such that $yRx$.
Then there is an infinite sequence $(x_{n})_{n\in \N}$ such that for all $n\in \N$, $x_{n}\in E_{n}$ and $x_{n}Rx_{n+1}$.
% Es sei EitE2,E3 ... eine abz\UTF{00E4}hibarunendliche Folge end-
%licher, nicht leerer Mengen und, R eine bin\UTF{00E4}re Relation, die so beschaffen ist, dass zu jedem Element x\UTF{201E}+i von E\UTF{201E}+1 mindestens ein solches Element x\UTF{201E} von E\UTF{201E} geh\UTF{00F6}rt, welches zu xn+1 in der Relation R steht, was wir durch x\UTF{201E}Rxn+l ausdr\UTF{00FC}cken wollen. Dann kann man in jeder der Mengen E\UTF{201E} je ein Element a\UTF{201E} derart bestimmen, dass f\UTF{00FC}r die unendliche Folge a,. a2, a3,... stets a\UTF{201E} R a\UTF{201E}+x bestehe
\end{princ}
The names \emph{K\"onig's infinity lemma} and \emph{K\"onig's tree lemma} are used in \cite{wever} which contains a historical account of these lemmas, as well 
as the observation that they are equivalent; the formulation involving trees apparently goes back to Beth around 1955 in \cite{bethweter}, as also discussed in detail in \cite{wever}.
When formulated in set theory, K\"onig's infinity lemma is equivalent to a fragment of the Axiom of Choice (\cite{levy1}*{p.\ 298}) over $\ZF$, which is mirrored by Theorem \ref{wood17}.    % and is (strictly) implied by Ramsey's theorem (\cite{fortru}).
%The following quote by K\"onig constitutes motivation and evidence that graph theory was intended to be infinitary.      
%\begin{quote}
%Diese Bemerkung ist wichtig, da, wenn man sie einmal angenommen hat, nichts im Wege steht die ``Sprache der Graphen'' auch dann zu nuetzen, wenn die Mengen [\dots] nicht endlich, ja sogar von beliebig grosser Machtigkeit sind. (\cite{koning16}*{p.\ 460})
%\end{quote}
%The final sentence states that graphs of any cardinality can be studied in graph theory.  
%Moreover, we note that K\"onig's infinity lemma is introduced in \cite{koning147} as a graph-theoretic formulation of another theorem from \cite{koning26}.  In both the French (\cite{koning26}*{\S3}) and German formulation (\cite{koning147}*{\S1}), the word `sequence' is used in the conclusion, i.e.\ an infinite path is a sequence of elements.  By contrast, the antecedent is always formulated using countable sets.  
%\end{rem}
%

\smallskip
%
%First of all, K\"onig's infinity lemma $\Korg_{2}$ as in Principle \ref{KIL2real} is clearly provable in $\ACA+\enum$ as the set $\cup_{n\in \N}E_{n}\subset \R$ therein is height-countable and can be enumerated.  
%By contrast, the generalisation $\Korg_{3}$ in Principle \ref{KIL22real} implies $\QFAC^{0,1}$ by Theorem \ref{wood17}.  

\smallskip

Secondly, we study following version of K\"onig's infinity lemma.
The only modification compared to Principle \ref{KIL2q} is the restriction to $(X, =_{X})$ where $X\subset \N^{\N}$.  
%Recall that a binary relation $R$ is given by a characteristic function $F_{R}:\R^{2}\di \R$, i.e.\ $xRy\equiv F_{R}(x, y)\geq_{\R} 0$. 
\begin{princ}[$\KINL$]\label{KIL22real}
Let $(E_{n})_{n\in \N}$ be a sequence of sets in $(X, =_{X})$ and let $R$ a binary relation such that for all $n\in\N$ we have:
\begin{itemize}
\item the set $E_{n}$ is finite and non-empty,
\item for any $x\in E_{n+1}$, there is at least one $y\in E_{n}$ such that $yRx$.
\end{itemize}
% finite non-empty sets and a binary relation $R$ such that for any $x\in E_{n+1}$, there is at least one $y\in E_{n}$ such that $yRx$.
Then there is a sequence $(x_{n})_{n\in \N}$ such that for all $n\in \N$, $x_{n}\in E_{n}$ and $x_{n}Rx_{n+1}$.
% Es sei EitE2,E3 ... eine abz\UTF{00E4}hibarunendliche Folge end-
%licher, nicht leerer Mengen und, R eine bin\UTF{00E4}re Relation, die so beschaffen ist, dass zu jedem Element x\UTF{201E}+i von E\UTF{201E}+1 mindestens ein solches Element x\UTF{201E} von E\UTF{201E} geh\UTF{00F6}rt, welches zu xn+1 in der Relation R steht, was wir durch x\UTF{201E}Rxn+l ausdr\UTF{00FC}cken wollen. Dann kann man in jeder der Mengen E\UTF{201E} je ein Element a\UTF{201E} derart bestimmen, dass f\UTF{00FC}r die unendliche Folge a,. a2, a3,... stets a\UTF{201E} R a\UTF{201E}+x bestehe
\end{princ}
Thirdly, the Axiom of Choice for finite sets is also studied in RM (\cite{gohzeg, skore3}).  We study the following instance involving again $(X, =_{X})$ where $X\subset \N^{\N}$.  
\begin{princ}[Axiom of finite Choice]\label{gohkuku}
Let $(X_{n})_{n\in \N}$ be a sequence of finite non-empty sets in $(X, =_{X})$.  Then there is $(x_{n})_{n\in \N}$ with $x_{n}\in X_{n}$ for all $n\in \N$.
\end{princ}
%It implies $\QFAC^{0,1}$ and hopefully $\QFDC^{1,1}$.
%and show that they inhabit the range of hyperarithmetical analysis when formulated close(r) to their original formulation.  
%A study of this topic from the point of view of a more general notion of countability can be found in Section~\ref{pifpoefpaf}.  
Fourth, we have Theorem \ref{wood17} where $\IND_{3}$ was introduced in Section \ref{prelim}.
% is equivalent to the fragment of the induction axiom for formulas $\varphi(n)\equiv(\forall f\in \N^{\N})(\exists g\in \N^{\N})\psi(f, g, n)$ with $\psi$ arithmetical. 
\begin{thm}[$\ACAo+\IND_{3}$]\label{wood17} The following are equivalent. 
\begin{enumerate}
 \renewcommand{\theenumi}{\alph{enumi}}
\item The Axiom of countable Choice as in $\QFAC^{0,1}$.\label{CC}
\item K\"onig's infinity lemma as in $\KINL$.\label{KINL}
\item The Axiom of finite Choice as in Principle \ref{gohkuku}.\label{gohi}
%\item The previous item restricted to singleton sets. 
\end{enumerate}
\end{thm}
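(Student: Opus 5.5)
I would prove the cycle (a) $\Rightarrow$ (b) $\Rightarrow$ (c) $\Rightarrow$ (a). The strategy throughout is that the equivalence relation $=_{X}$ makes a finite set contain uncountably many representatives, so the step from a finite set to an actual element is exactly a choice problem.

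For (a) $\Rightarrow$ (b), fix $(E_{n})_{n\in\N}$ and $R$ as in $\KINL$. I would use $\QFAC^{0,1}$ (after coding, with $(\exists^{2})$ deciding the arithmetical matrices) to obtain, for each $n$, a finite list of representatives $w_{n}=\langle y_{0},\dots,y_{k_{n}}\rangle$ of elements of $E_{n}$ that is maximal, in the sense that every $x\in E_{n}$ is $=_{X}$-equal to some $y_{i}$. The existence of such a list for a single $n$ is where $\IND_{3}$ enters. From the size bound $N$ and induction on the number of pairwise different elements, I get a maximal pairwise different tuple in $E_{n}$, and maximality is a formula of the form $(\exists w)(\forall x)\dots$, which fits $\IND_{3}$. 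I also need $R$ to respect $=_{X}$, or else to work directly with the representatives. With the sequence $(w_{n})_{n\in\N}$ in hand, the problem reduces to a finitely branching tree of index sequences $(i_{0},\dots,i_{m})$ with $y^{n}_{i_{n}}Ry^{n+1}_{i_{n+1}}$. This tree is infinite: by backward induction from level $m$, each node at level $m$ extends down to level $0$. The second-order K\"onig's lemma, which is available in $\ACAo$, then gives a path, and hence the sequence $(x_{n})_{n\in\N}$.

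For (b) $\Rightarrow$ (c), take $(X_{n})_{n\in\N}$ finite and non-empty and let $R$ be the total relation. Every $x\in X_{n+1}$ is then trivially $R$-related to some element of $X_{n}$, because $X_{n}$ is non-empty. $\KINL$ yields $x_{n}\in X_{n}$ directly.

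For (c) $\Rightarrow$ (a), let $\varphi$ be quantifier-free with $(\forall n\in\N)(\exists f\in\N^{\N})\varphi(f,n)$. Define $X_{n}:=\{\langle n\rangle*f : \varphi(f,n)\}$ and declare $g=_{X}h$ if and only if $g(0)=h(0)$. Each $X_{n}$ is then non-empty and has size bound $1$, since any two of its elements are $=_{X}$-equal. Finite choice provides $x_{n}\in X_{n}$, and the sequence $(x_{n}(1)*x_{n}(2)*\dots)_{n\in\N}$ is the required choice function. This step is essentially the singleton trick.

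I expect the main obstacle to be the first step, namely producing a complete finite list of representatives of each $E_{n}$ uniformly in $n$. This needs a careful combination of $\IND_{3}$, to obtain maximal pairwise-different tuples, with $\QFAC^{0,1}$ applied to a formula that must be rendered quantifier-free using $\exists^{2}$ and coding. Since the maximality clause contains a universal quantifier over $\N^{\N}$, I would first try to choose, using $\QFAC^{0,1}$, a pair consisting of $k_{n}$ and a tuple with $k_{n}$ maximal. I would then check whether the quantifier over $x$ can be absorbed. If it cannot, I would instead apply the choice to the finitely many levels of the tree directly.
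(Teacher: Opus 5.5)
Your steps (b)$\Rightarrow$(c), via the total relation, and (c)$\Rightarrow$(a), via the singleton trick with $g=_{X}h\asa g(0)=h(0)$, are fine. The latter is exactly the paper's argument.

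The gap is in (a)$\Rightarrow$(b). A sequence $(w_{n})_{n\in\N}$ of \emph{complete} lists of representatives of the $E_{n}$ cannot be obtained in $\ACAo+\QFAC^{0,1}+\IND_{3}$. The quantifier over $x\in E_{n}$ in the completeness clause cannot be absorbed by $(\exists^{2})$. Choosing $k_{n}$ maximal does not help either, since maximality of $k_{n}$ is itself a $\Pi_{1}^{1}$-condition.

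In fact such a sequence is as strong as $\BOOT$. Given $Y^{2}$, let $E_{n}$ consist of those $f$ with $f(0)=n$ and either $f(1)=0$, or $f(1)=1\wedge Y(f(2)*f(3)*\dots,n)=0$. Put $f=_{X}g\asa[f(0)=g(0)\wedge f(1)=g(1)]$ and $fRg\asa g(0)=f(0)+1$. This is a legitimate instance of $\KINL$: each $E_{n}$ is non-empty with size bound $2$, and $R$ respects $=_{X}$. Yet complete lists $w_{n}$ would yield, via $(\exists^{2})$, the set $\{n\in\N:(\exists i<|w_{n}|)(w_{n}(i)(1)=1)\}=\{n\in\N:(\exists g\in\N^{\N})(Y(g,n)=0)\}$. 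Now $\ACAo+\BOOT$ proves $\FIVE$, whereas Hunter's term model over the $\omega$-model of the hyperarithmetical sets satisfies $\ACAo+\QFAC^{0,1}+\IND_{3}$ but not $\FIVE$. So your intermediate claim is not available, and your backward-induction argument on the tree of index sequences has nothing to run on.

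The missing idea is that you never need to exhaust the $E_{n}$; it suffices to choose countably many finite chains uniformly. Apply $\IND_{3}$ (via the usual trick for negated formulas) to
\[
\varphi(n)\equiv(\forall x\in E_{n})(\exists w^{1^{*}})\big[|w|=n+1\wedge w(n)=x\wedge(\forall i\leq n)(w(i)\in E_{i})\wedge(\forall j<n)(w(j)Rw(j+1))\big],
\]
whose induction step is exactly the hypothesis of $\KINL$. As the $E_{n}$ are non-empty, for every $n$ there is an $R$-chain of length $n$ through $E_{0},\dots,E_{n-1}$. This statement has a matrix with only bounded number quantifiers, so $\QFAC^{0,1}$ does yield a sequence $(w_{n})_{n\in\N}$ of such chains.

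Next, build a finitely branching tree out of the countably many reals $w_{n}(i)$. Identify $w_{n}(i)$ with $w_{m}(i)$ for the least $m>i$ such that $w_{m}(i)=_{X}w_{n}(i)$; this is computable by $(\exists^{2})$, and the paper's footnote leaves this identification implicit. Then apply K\"onig's lemma from $\ACA_{0}$. This is the paper's route.

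The identification requires $R$ to respect $=_{X}$, and your alternative of `working directly with the representatives' is not an option. Without extensionality, $\KINL$ is simply false. Take $E_{n}=\{f:f(0)=n\}$, $f=_{X}g\asa f(0)=g(0)$, and $fRg\asa[g(0)=f(0)+1\wedge f(1)=g(1)+1]$; any solution would force $x_{0}(1)\geq n$ for all $n$.
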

\begin{proof}
Assume $\KINL$ and let $\varphi$ be quantifier-free and such that $(\forall n\in \N)(\exists f\in \N^{\N})\varphi(n, f)$.  
Define $X$ as the set of $f\in \N^{\N}$ with $\varphi(f(0), f(1)*f(2)*\dots)$ and put $f=_{X}g$ if $f(0)=g(0)$.  
The set $E_{n}=\{ f\in X:  f(0)=n \}$ is non-empty and finite as in Definition \ref{trukkefoor}.
%Define $X$ as the (trivially coded) set of $w^{1^{*}}$ such that $(\forall i<|w|)\varphi(i, w(i))$ and put $w=_{X} v$ if $|w|=|v|$.
%The set $E_{n}=\{w\in X: |w|=n \}$ is non-empty by $\IND_{2}$ and finite as in Definition \ref{trukkefoor}.
%Define the binary relation $R$ by $wRv$ if $|v|=|w|+1$.  By $\Korg_{3}$, there is a sequence $(w_{n})_{n\in \N}$ with $w_{n}Rw_{n+1}$.
Define the binary relation $R$ by $fRg$ if $f(0)+1=g(0)$.  By $\KINL$, there is a sequence $(f_{n})_{n\in \N}$ with $f_{n}Rf_{n+1}$.
By definition, this sequence yields the required choice function for $\QFAC^{0,1}$.   This also yields that \eqref{gohi} $\di$  \eqref{CC}, where the reversal is trivial.   

\smallskip

To prove $\KINL$, we shall show the following (using induction) for all $n\in \N$:
\be\label{truf}
(\exists w^{1^{*}})\big[|w|=n \wedge (\forall i<|w|)(w(i)\in E_{i})\wedge (\forall j<|w|-1)(w(j)Rw(j+1))   \big].
\ee
Then apply $\QFAC^{0,1}$ to obtain $(w_{n})_{n\in \N}$.  With the latter sequence, one readily\footnote{Let $\sigma \in T$ hold in case $\sigma$ is an initial segment of $w_{n}$ for some $n\in \N$.} builds a finitely branching tree (in the sense of second-order RM).  Now use K\"onig's tree lemma (provided by $\ACA_{0}$; see \cite{simpson2}*{III.7.2}) to obtain the path required by $\KINL$.  To prove \eqref{truf} for all $n\in \N$, consider the formula $\varphi(n)$ defined as
\[
(\forall x\in E_{n})(\exists w^{1^{*}})\left[\begin{array}{c} |w|=n+1 \wedge (\forall i<|w|)(w(i)\in E_{i})\wedge x=w(n)\\
\wedge  (\forall j<|w|-1)(w(j)Rw(j+1)  )\end{array}\right].
\]
Now use $\IND_{3}$ to establish $(\forall n\in \N)\varphi(n)$, which implies \eqref{truf} for all $n\in \N$.
\end{proof}
The previous theorem is significant for hyperarithmetical analysis (see e.g.\ \cites{skore2, skore3}) as follows:  the finite choice principle $\FSAC$ from \cite{gohzeg} is essentially the second-order version of Principle~\ref{gohkuku}.  
However, $\FSAC$ is strictly weaker than $\SAC$, while Principle \ref{gohkuku} is equivalent to $\QFAC^{0,1}$ and $\ACAo+\QFAC^{0,1}$ is conservative over $\SAC$ (\cite{hunterphd}*{\S2}).  
Thus, the logical strength of $\FSAC$ depends on its second-order formulation: Definition \ref{trukkefoor} yields a more general principle.  
There should be more examples of this phenomenon.    
%as it suggest that, when formulated for $(X, =_{X})$, many theorems of hyperarithmetical analysis
%yield equivalences involving $\QFAC^{0,1}$.  Since $\ACAo+\QFAC^{0,1}$ is still a conservative extension of $\SAC$, the former can
%be said to be in the range of hyperarithmetical analysis.  Nonetheless, any THA between $\SAC$ and $\WSAC$ seems to yield an equivalence involving $\QFAC^{0,1}$
%when formulated for $(X, =_{X})$. 

\smallskip

%Finally, one readily obtains an equivalence as follows.  
%\begin{rem}[$\ACAo$]\rm
%Show that $\Korg_{3}\asa \QFAC^{0,1}$.  Use the proof of Theorem \ref{wood2} if necessary.  
%\end{rem}
In conclusion, finiteness as in Definition \ref{trukkefoor} yields equivalences between K\"onig's infinity lemma and $\QFAC^{0,1}$, and the same presumably for other statements of hyperarithmetical analysis.  
%Other such equivalences should be possible, e.g.\ involving Halin's infinite ray theorem from \cite{skore3}.   
% which is more general than other versions of 

%\subsubsection{Graphs}
%X

%Other graph theory? Like Hirst theorems or Shore's ray thm?

%\section{Conclusion}\label{after}
%The above results can be interpreted in various ways. One could write an iconoclastic pamphlet (and presumably post it to the FOM mailing list) entitled 
%\begin{center}
%\emph{J'Accuse\dots! Lettre au Pr\'esident de la R\'everse Math\'ematique}
%\end{center}
%pointing out the problem of coding in second-order arithmetic in wording bordering on the revolutionary.    
%
%\smallskip
%
%While such course of action may seem tempting and even expected of the author, our final judgement goes in the polar opposite direction.  
%Indeed, we believe that the above results suggest we should take Friedman's \emph{strict reverse mathematics} seriously.  

\end{document}